\newcommand{\rda}[1]{\textcolor{black}{#1}}
\newcommand{\rdb}[1]{\textcolor{black}{#1}}
\newcommand{\rdc}[1]{\textcolor{black}{#1}}
\newcommand{\rdd}[1]{\textcolor{black}{#1}}
\newcommand{\rde}[1]{\textcolor{black}{#1}}
\newcommand{\rdf}[1]{\textcolor{black}{#1}}
\newcommand{\E}{{\mathbb{E}}}
\newcommand{\ssX}{{\cal{X}}}
\newcommand{\N}{{\mathbb{N}}}
\renewcommand{\P}{\mathbb{P}}
\newcommand{\dd}{d}
\newcommand{\PPP}{\bar{\mathbb{P}}}
\newcommand{\ppp}{\bar{p}}
\newcommand{\ddd}{\bar{d}}
\newcommand{\www}{\w^{\dom}}
\newcommand{\Q}{\mathbb{Q}}
\newcommand{\df}{\triangleq}
\newcommand{\asyrate}{r}
\renewcommand{\i}{{\bf 1}}
\newcommand{\unaiv}{{\bf v}}
\newcommand{\pth}{\omega}
\newcommand{\pths}{\Omega}
\newcommand{\ipths}{\Phi}
\newcommand{\state}{x}
\newcommand{\statea}{\state}
\newcommand{\stateb}{z}
\newcommand{\init}{s}
\newcommand{\intprobest}{\hat{\ipf}}
\newcommand{\ipprobest}{\hat\ipf}
\newcommand{\ipf}{\pi}
\newcommand{\nsamp}{N}
\newcommand{\ex}[1]{$\cdot$10$^{#1}$}
\newcommand{\ps}[1]{\pth(#1)}
\newcommand{\pthl}{n_{\pth}}
\newcommand{\gstate}{g}
\newcommand{\tstate}{t}
\newcommand{\core}{\Lambda}
\newcommand{\edge}{\Gamma}
\newcommand{\stopis}{m}
\newcommand{\w}{v}
\newcommand{\dom}{\Delta}
\newcommand{\ndom}{\Psi}
\newcommand{\tot}{\ipths}
\DeclareMathOperator{\vr}{Var}
\newcommand{\vrq}{\text{Var}_\Q}
\renewcommand{\dim}{D}
\newcommand{\pl}{+}
\newcommand{\plpl}{++}
\newcommand{\website}{\rdd{\tt http://datashare.is.ed.ac.uk/handle/10283/2630}}
\newcommand{\citet}{\citeN}
\begin{document}

\markboth{D. Reijsbergen et al.}{Path-ZVA: general, efficient and automated importance sampling for HRMSs (preprint)}

% Title portion
\title{Path-ZVA: general, efficient and automated importance sampling for highly reliable Markovian systems (preprint)}
\author{DANI\"EL REIJSBERGEN
\affil{University of Edinburgh, Scotland}
PIETER-TJERK DE BOER
\affil{University of Twente, Netherlands}
WERNER SCHEINHARDT
\affil{University of Twente, Netherlands}
SANDEEP JUNEJA
\affil{Tata Institute of Fundamental Research, India}}
% NOTE! Affiliations placed here should be for the institution where the
%       BULK of the research was done. If the author has gone to a new
%       institution, before publication, the (above) affiliation should NOT be changed.
%       The authors 'current' address may be given in the "Author's addresses:" block (below).
%       So for example, Mr. Abdelzaher, the bulk of the research was done at UIUC, and he is
%       currently affiliated with NASA.

\begin{abstract}
We introduce Path-ZVA: an efficient simulation technique for estimating the probability of reaching a rare goal state before a \rdd{regeneration} state in a (discrete-time) Markov chain. Standard Monte Carlo simulation techniques do not work well for rare events, so we use importance sampling; i.e., we change the probability measure governing the Markov chain such that transitions `towards' the goal state become more likely. To do this we need an idea of distance to the goal state, so some level of knowledge of the Markov chain is required. In this paper, we use graph analysis to obtain this knowledge. In particular, we focus on knowledge of the shortest paths (in terms of `rare' transitions) to the goal state. 
%We show that only a subset of the (possibly huge) state space of the Markov chain needs to be considered. We compare our results to well-known importance sampling methods from the literature and demonstrate the large potential gains of our method \rde{when the relevant subset identified by our method is numerically tractable}.
\rde{We show that only a subset of the
(possibly huge) state space needs to be considered.
\rdf{This is effective when the high dependability of the system is primarily due to high component reliability,
but less so when it is due to high redundancies}.
For several models we compare our results to well-known
importance sampling methods from the literature and demonstrate the large potential gains of our method.}

\end{abstract}

\category{I.6}{Computing methodologies}{Rare-event simulation}

\terms{Algorithms, Theory}

\keywords{Rare-event simulation, importance sampling, highly reliable systems}

\acmformat{(...)}

\begin{bottomstuff}
%This work is supported by (...)
%
%Author's addresses: (...)
\end{bottomstuff}

\maketitle

\definecolor{green2}{rgb}{0,0.6,0}
\definecolor{red2}{rgb}{0.8,0,0}
\newcommand{\red}{\color{red2}}

\newcommand{\ctlpath}{\phi}
\newcommand{\likrat}{L_\Q}

\section{Introduction} \label{sec: introduction}

Critical systems and infrastructures are increasingly required to be highly reliable,
which has implications not only for the reliability of individual system components,
but also for the accuracy of model-based evaluation.
Realistic models of highly reliable systems typically have very large state spaces.
Additionally, low component failure rates or a wide range of included system behaviours mean
that a model may exhibit multiple \emph{time scales},
in which system failure is the unlikely result of low-intensity state transitions (e.g., component failures)
\rdd{taking precedence over} high-intensity transitions (e.g., component repairs).
Numerical methods for evaluating system failure probabilities such as those implemented in the model checking tool PRISM \cite{kwiatkowska2011prism} --- e.g., the Gauss-Seidel method --- typically prove to be computationally infeasible due to the size of state space.
Furthermore, state space reduction techniques that ignore low-intensity behaviour risk disposing of \rdd{unlikely but interesting} events.

A common and generally applicable alternative is \emph{Monte Carlo simulation}, which only requires an implicit description of the state space and is therefore largely independent of its size.
However, if the interesting behaviour is unlikely, a prohibitively large number of simulation runs is typically required before the rare event of interest is first observed.
Hence, there is a need for hybrid techniques that strike a compromise between numerical techniques and standard Monte Carlo, whilst maintaining, to the largest possible extent, the \rdd{general applicability} of both methods.

In this paper we focus on Markovian systems in which individual components can fail and be repaired,
and system failure occurs when certain combinations of components have failed.
Crucially, we assume that the component failure rates have a (much) smaller order of magnitude than the repair rates. This is formalised in the notion of \emph{highly reliable Markovian systems} (HRMSs), which include any Markov chain in which rates are parameterised by powers of some rarity parameter~$\epsilon$,
where higher powers of~$\epsilon$ correspond to component failures.
Our (very small) probability of interest is that of reaching a system failure state within
one \rdd{regeneration cycle}, i.e., between two visits to a given \rdd{regeneration} state.
Once this quantity has been estimated,
renewal theory \cite{cox1962renewal} can be used to
calculate many system performance measures of practical interest, such as the mean time to failure, the unreliability, and the unavailability,
without the need to estimate any other quantities that involve rare events.

Our starting point will be a discrete time Markov chain (DTMC) \rdd{with fixed state space size and structure}. When the HRMS is a Markov chain in continuous time (as is usually the case), we simply consider the DTMC embedded at transition times (replacing the transition rates by normalized transition probabilities). This is allowed since the probability of our interest does not depend on the times spent in states, and hence is the same in the embedded DTMC as in the original system.

To estimate rare event probabilities in the DTMC, we use \emph{importance sampling}
--- a simulation method in which transitions that lead to the rare event are made more likely \cite{heidelberger1995fast}.
More precisely, we follow a so-called Zero Variance Approximation (ZVA) scheme, based on some \emph{a priori} approximation of the probability of interest. For this approximation we use \emph{path-based} measures for the distance from each state to the target state, which is why we call our method ``Path-ZVA''. Our distance measure is the number of `failure' transitions needed to get to the target state, or, in general, the $\epsilon$-power of the most likely path to get there, and during the simulation we will `push' the system in a direction that minimises this distance.
It turns out that \rdf{in many cases} only a small part of the state space needs to be considered
to find the {\em relevant} paths, making the method computationally advantageous.
Hence, our method consists of \emph{(i) a pre-processing step}, in which
a graph-analysis algorithm finds the shortest paths on a subset of the state space,
followed by \emph{(ii) the actual simulations}, using an importance sampling scheme (based on the shortest paths) for efficiently estimating the probability of interest over the entire state space.

This Path-ZVA procedure is:\\
\hspace{1cm} \emph{(1) general}, as the \emph{only} requirements on the Markov model are that it is parameterised using $\epsilon$-orders and that the relevant subset \rdd{is numerically better tractable} than the state space \rdf{as a whole};\\
	\emph{(2) efficient}, as it provably has the desirable properties of either Bounded Relative Error and Bounded Normal Approximation, or Vanishing Relative Error \rdd{for small $\epsilon$}; and\\
	\emph{(3) automated}, as the algorithm requires no user input apart from the model description. The code is available on \website. 

The remainder of this paper is as follows. After a formal description of the model setting and of (ZVA) importance sampling simulation in Section~\ref{sec: preliminaries},
we describe our `Path-ZVA algorithm' in detail in Section~\ref{sec: dijkstra algorithm}.
Next, we prove in Section~\ref{sec: bre} that the resulting estimators have several desirable efficiency
properties.
We discuss a further variance reduction technique in Section~\ref{sec: vrff},
which makes optimal use of the pre-processing step. Finally we present an empirical evaluation of all the discussed techniques in Section~\ref{sec: results}, including a comprehensive case study involving several benchmark models from the literature.
Most of this paper is based on Chapters 5 and 6 of \cite{reijsbergen2013efficient}.

\section{Model \& Preliminaries} \label{sec: preliminaries}

\subsection{Model setting} \label{sec: model}

The model is given in terms of a discrete-time Markov chain (DTMC) with a (large, possibly infinite) state space $\ssX$. (Note that the timing behaviour of the system is not important to the method; in fact, the DTMCs of the multicomponent systems of Section~\ref{sec: results} are the underlying DTMCs of continuous-time Markov Chains.) We assume that the system starts in a unique initial state $\init \in \ssX$,
and that there is a single goal state $\gstate \in \ssX$
({potentially after merging all states from a bigger goal set into a single state}).
We also assume that there is (again after a potential merge) a single \rdd{taboo/regeneration} state $\tstate \in \ssX$.
Note that it may \emph{not} be clear \emph{a priori} which states are to be merged into $\gstate$ and $\tstate$; since large DTMCs are typically described using a high-level language (e.g., a stochastic Petri net), we determine which states in the relevant part of the state space to collapse into $\gstate$ and $\tstate$ on-the-fly whilst running the algorithm described in Section~\ref{sec: dijkstra algorithm}.
 In the following, we assume that $\init \neq \gstate$ and $\init \neq \tstate$.
We are no longer interested in the behaviour of the system once the system hits $\gstate$ or $\tstate$ so we assume that these states {are absorbing, i.e.,} have a self-loop with probability $1$.

The complete transition probability structure in the DTMC is given by the probabilit\rdb{ies} $p_{\statea \stateb}$ of jumping from state $\statea$ to state $\stateb$\rdb{, with $\statea, \stateb \in \ssX$}. Th{e} probabilities {$p_{\statea \stateb}$} depend on $\epsilon$, the time-scale parameter that formalises the notion that there are fundamental differences between the transition probabilities in the DTMC. To say more about the dependence on $\epsilon$ we will write in the sequel, with $f,h, f_y, g_y : \mathbb{R} \rightarrow \mathbb{R}$:
\begin{tabular}{ll}
$f(\rdb{\epsilon}) = \Theta(\rda{h}(\rdb{\epsilon}))$
& iff\quad $0 < \lim_{\rdb{\epsilon} \downarrow 0} {f(\rdb{\epsilon})}/{\rda{h}(\rdb{\epsilon})} < \infty,$\\
$f(\rdb{\epsilon}) = O(\rda{h}(\rdb{\epsilon}))$
& iff\quad $\lim_{\rdb{\epsilon} \downarrow 0} {f(\rdb{\epsilon})}/{\rda{h}(\rdb{\epsilon})} < \infty,$\\
$f(\rdb{\epsilon}) = o(\rda{h}(\rdb{\epsilon}))$ 
& iff\quad $\lim_{\rdb{\epsilon} \downarrow 0} {f(\rdb{\epsilon})}/{\rda{h}(\rdb{\epsilon})} = 0,$\\
$f_y(\epsilon) = \Theta(g_y(\epsilon))$ \emph{uniformly} in $y$
&iff\quad $\exists a,b>0$ such that $\forall y: a < \lim_{\epsilon\downarrow 0} {f_y(\epsilon)}/{g_y(\epsilon)} < b,$
\end{tabular}
assuming these limits exist.

Throughout, our assumption is that for all non-zero transition probabilities $p_{\state\stateb} > 0$, \rdb{some $\asyrate_{\statea \stateb} \in \rdd{\N \cup \{\infty\}}$ exists such that}
$p_{\statea \stateb}(\epsilon) = \Theta(\epsilon^{\asyrate_{\statea \stateb}}).$
If $p_{\state\stateb} = 0$, we set $\asyrate_{\statea \stateb}$ \rdd{equal to $\infty$.}
\rdd{Note} that $r_{\statea \stateb}$ for \mbox{$\statea, \stateb \in \ssX$} are fixed parameters of the model. An example of a DTMC parameterised in this way can be found in Figure~\ref{fig: model example}. (See Section~\ref{sec: practical application} for a discussion for how these $r_{\statea \stateb}$ are chosen in practice.)

\begin{figure}[t!]
\centering
	\caption{Example of a HRMS with initial state $\init$, goal state $\gstate$ and \rdd{regeneration} state $\tstate$.}
	\label{fig: model example}
	\vspace{-0.2cm}
	\begin{tikzpicture}[->,>=stealth',shorten >=1pt,auto,node distance=3cm,semithick,scale=0.8] 
		\tikzstyle{every state}=[fill=blue!20,draw=black,thick,text=black,scale=0.5]
		\node at (0,0) (s0) {};

		\node[state] at (0,-0.6) (s02) {\huge $\tstate$};
		
		\tikzstyle{every state}=[fill=white,draw=black,thick,text=black,scale=0.5]
		\node[state] at (0,0.6) (s01) {\huge $\init$};
		\node[state] (s1) [right of = s0] {\huge $1$};
		\node[state] (s2) [right of = s1] {\huge $2$};
		\node[state] (s3) [right of = s2] {\huge $3$};
		\node[state] (s4) [right of = s3] {\huge $4$};
		%\node[state] (s4) [right of = s3] {$s_4$};
		
		\tikzstyle{every state}=[fill=red!20,draw=black,thick,text=black,scale=0.5]
		\node[state] (sr) [right of = s4] {\huge $\gstate$};

			\path (s01) edge [bend left] node[above] {\footnotesize $1$} (s1);
			\path (s1) edge  [bend left] node[below] {\footnotesize \textcolor{black}{$1-\epsilon$}} (s02);
			\path (s1) edge  [bend left] node[above] {\footnotesize \textcolor{black}{$\epsilon$}} (s2);
			\path (s2) edge  [bend left] node[below] {\footnotesize $1-\epsilon$} (s1);
			\path (s2) edge  [bend left] node[above] {\footnotesize $\epsilon$} (s3);
			\path (s3) edge  [bend left] node[below] {\footnotesize $1-\epsilon$} (s2);
			\path (s3) edge  [bend left] node[above] {\footnotesize $\epsilon$} (s4);
			\path (s4) edge  [bend left] node[below] {\footnotesize $1-\epsilon$} (s3);
			\path (s4) edge  [] node[above] {\footnotesize $\epsilon$} (sr);
	\end{tikzpicture}
	\vspace{-0.4cm}
	\label{fig: mm1 queue}
\end{figure}
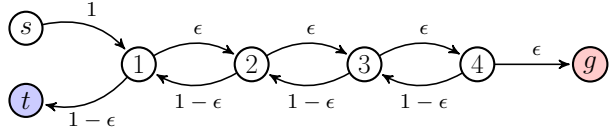
Let a \emph{path} $\pth$ be a sequence $\ps{0},\ps{1},\ldots,\ps{\pthl}$ of states in $\ssX$, with $\pthl$ denoting the number of steps in the path. Let $\pths(\state)$ be the set of paths $\pth$ \rdb{starting at} $\ps{0} = \state$. We are interested in the event that the system reaches $\gstate$ before $\tstate$. To formalise this, \rdb{let} $\forall \state \in \ssX$
\begin{equation*}
\ipths(\state) \df \left\{ \pth \in \pths(\state): \ps{\pthl} = \gstate, \,\,\, \forall k < \pthl : \ps{k} \notin \{\tstate, \gstate\}\right\}
\end{equation*}
\rdb{be} the set of all paths starting in $\state$ in which the event of interest occurs and which terminate as soon as $\gstate$ is reached. For all $\state \in \ssX$ we define the probability that the rare event occurs, starting in $\state$, as
\begin{equation}
\ipf(\state) \df\sum_{\pth \in \ipths(\state)} \P(\pth) = \P(\ipths(\state))  \qquad \mbox{ where }  \qquad \P(\pth) \df \prod_{i=1}^{\pthl} p_{\ps{i-1} \ps{i}}. \label{eq def mu}
\end{equation}
We are interested in~\mbox{$\ipf \df \ipf(\init)$} and estimate this probability using simulation, as described below.

\subsection{Simulation} \label{sec: simulation}

The basic means to evaluate $\ipf$ is through a point estimate; to obtain one, we draw~\mbox{$\nsamp \in \N$} sample paths to obtain a sample set $\{\pth_1,\ldots,\pth_{\nsamp}\}$. To draw a sample path, we start in $\init$ and draw successor states using $\P$ until we reach \rdb{either} $\tstate$ or $\gstate$. (We assume that this happens in finite time with probability \rdd{$1$.)}
Let $\ipths \df \ipths(\init)$, and $\i_{\ipths}(\pth)$ denote an indicator function which equals $1$ if $\pth$ is in $\ipths$ and $0$ otherwise; this allows us to obtain an unbiased estimator of $\ipf$, given by 
\begin{equation}
\hat{\ipf}_{\P} = \displaystyle\frac{1}{\nsamp} \sum_{\rdb{k}=1}^{\nsamp} \i_{\ipths}(\pth_{\rdb{k}}). \label{eq monte carlo} 
\end{equation}
An approximate 95\%-confidence interval for $\ipf$ can be obtained using the Central Limit Theorem (see, e.g., \citet[\S 4.5]{law1991simulation})

\begin{table}
	\tbl{List of symbols}{
		\begin{tabular}{c l}
			$\ssX$ & state space of the Markov chain \\
			$\init$, $\gstate$, $\tstate$ & initial state, goal state, and \rdd{taboo/regeneration} state respectively \\
			$\rdb{\P}$, $p_{\statea\stateb}$ & original probability of the transition from state $\statea$ to state $\stateb$ \\
			$\rdb{\Q}$, $q_{\statea\stateb}$ & new probability of the transition from state $\statea$ to state $\stateb$ \\
			$\rdd{\asyrate_{\statea\stateb}}$ & \rdd{$\epsilon$-order of the transition from state $\statea$ to state $\stateb$, i.e., $p_{\statea\stateb} = \Theta(\epsilon^{\asyrate_{\statea\stateb}})$ } \\
			$\pth$ & a path, i.e., a sequence of states $\ps{0},\ps{1},\ldots,\ps{\pthl}$  \\
			$\P(\cdot)$ & probability of a \rdb{(set of) path(s) under $p_{\statea\stateb}$}\\
			$\Q(\cdot)$ & \rda{probability of a \rdb{(set of) path(s)} under $q_{\statea\stateb}$}\\
			$\pths(\state)$ & set of all paths $\pth$ \rdb{starting at} $\ps{0} = \state$ \\
			$\ipths(\state)$ & set of all \rdb{`successful'} path\rdb{s i}n $\pths(\state)$, in which $\gstate$ is reached before $\tstate$ \\  % 
			$\ipf(\state)$ & $\sum_{\pth \in \ipths(x)} \P(\pth) = \P(\ipths(\state))$ \\
			$d(\state,\stateb)$ & shortest distance from $\state$ to $\stateb$ in terms of $\epsilon$-orders \\
			$\core$ & all states $\state$ for which $d(\init,\state) \leq d(\init,\gstate)$, \rdb{including $\init$ and $\gstate$} \\
			$\edge$ & all states $\stateb$ in $\ssX \setminus \core$ such that $\exists \statea \in \core$ s.t. $p_{\statea\stateb} > 0$ \\
			$\ppp_{\statea\stateb}$, $\PPP$, $\ddd$ & $p_{\statea\stateb}$, $\P$, $d$ as before, but in the system in which states in $\edge$ \\
			& \;\; have been given a transition with probability $1$ to $\gstate$ \\
			$\dom(x)$ & \rdb{`dominant'} paths $\pth \in \ipths(\state)$, for which $\rdb{\PPP}(\pth) = \Theta(\epsilon^{\rdd{\ddd}(\state,\gstate)})$ \\
			$\ipths$, $\ipf$, $\dom$ & shorthand notation for $\ipths(\init)$, $\ipf(\init)$, $\dom(\init)$ \\
			$\www(\state)$ & $\sum_{\pth \in \dom(x)} \PPP(\pth) = \PPP(\dom(\state))$, \rdb{approximation of $\ipf(\state)$} \\
	\end{tabular}}
\end{table}

As we discussed in the introduction, we use importance sampling: we simulate using different transition probabilities $(q_{\statea \stateb})_{\statea, \stateb \in \ssX}$ under which paths in $\ipths$ are more likely. \rda{Let $\Q$ be the probability measure on paths defined analogously to $\P$ but for $q_{\statea\stateb}$.} We compensate for overestimation by weighting {each outcome} with the ratio of $\P$ and $\Q$.
Every time a transition is sampled using the new probabilities, this weighting factor needs to be incorporated.
Our new estimator --- replacing \eqref{eq monte carlo} --- then becomes
 \begin{equation}
  \hat{\ipf}_{\Q} = \displaystyle\frac{1}{N} \sum_{\rdb{k}=1}^N \likrat(\omega_{\rdb{k}}) \cdot \i_{\ipths}(\pth_{\rdb{k}}),
 \label{eq importance sampling estimator}
 \quad\text{with}\quad
  \likrat(\pth) = \prod_{i=1}^{\pthl}  \frac{{p_{\ps{i-1} \ps{i}}}}{{q_{\ps{i-1} \ps{i}}}}.
 \end{equation}
This estimator is unbiased for any new distribution that assigns positive probability to transitions that have positive probability under the old distribution on paths in $\ipths(\init)$ {(by the Radon-Nikodym Theorem, see Chapter~7 of Capi\'nski and Kopp [2004])}. \nocite{capinski2004measure} In the following, we will write $\hat{\ipf}  = \hat{\ipf}_{\Q} $ for brevity.

If $\Q$ is chosen carefully, the estimator based on \eqref{eq importance sampling estimator} will have a lower variance than the standard estimator. The performance of an importance sampling method is measured by the variance of $\hat{\ipf}$ under $\Q$, given by
\begin{equation*}
\vr_{\Q}(\intprobest) = \E_\Q \left( \likrat^2 \cdot {\bf 1}_{\ipths}\right)- {\ipf}^2.
\end{equation*}
Using $\Q = \P$, we obtain the variance of the standard estimator: $\ipf(1 - \ipf)$. 
A particularly interesting efficiency metric for an estimator is its \emph{relative error}, given by
\begin{equation*}
\frac{\sqrt{\vr_{\Q}(\intprobest) }}{\ipf}.
\end{equation*}
The relative error of the standard estimator is given by $\sqrt{(1-\ipf) / \ipf}$, which goes to infinity when $\ipf$ goes to zero. When the relative error of an estimator remains bounded when $\ipf$ goes to zero, we say that our estimator has the desirable property of \emph{Bounded Relative Error} (BRE). When it goes to zero, we say that it has the even more desirable property of \emph{Vanishing Relative Error} (VRE) \cite{lecuyer2010asymptotic}.

We use the Zero Variance Approximation (ZVA) approach (cf. \cite{lecuyer2008approximate}),
and present the following probability measure $\Q$
\begin{equation}\label{eq zva def}
 q_{\statea\stateb} \df
  \frac{ p_{\statea\stateb} \w(\stateb)}
        {\sum_{\statea' \in \ssX} p_{\statea \statea'} \w(\statea')}.
\end{equation}
where $v(z)$ is some {\em approximation} for the true probablity $\pi(z)$.
Clearly, if $v(z)$ were exactly equal to $\pi(z)$, the denominator would
be $\pi(x)$ and the estimator would have zero variance \cite{deboer2007estimating},
but of course we do not explicitly know $\pi(\cdot)$.
If the simulation distribution $\Q$ associated with the approximation $\w$ is good enough then we have succeeded in overcoming the main problem facing standard Monte Carlo simulation of rare events.
The particular ZVA technique (choice of $v$) discussed in this paper --- namely Path-ZVA --- will be the subject of Section~\ref{sec: dijkstra algorithm}.

\subsection{Related work} \label{sec: related work}

In this section we give a brief overview of papers on the use of importance sampling for Highly Reliable Markovian Systems that we consider to be particularly relevant to this paper, either because they discuss literature benchmark\rda{s} o\rda{r} because they discuss recent advances.
As a first remark, note that our notion of an HRMS
(namely any Markov chain in which the transitions are \rdd{given} $\epsilon$-orders)
is more general than what is typically considered in the literature.
In the literature, an HRMS is often restricted to what we call a \emph{multicomponent system},
where only failure transitions have rates of order $\epsilon$, while we do not have this restriction.

The first application of an importance sampling method --- namely \emph{failure biasing} --- to HRMSs goes back to \citet{lewis1984monte}.
The general notion of failure biasing means that greater probability is assigned to `\emph{failures}', i.e., transitions that are chosen with a probability that is $O(\epsilon)$.
\citet{shahabuddin1994importance} studied the asymptotic properties of a refined version of failure biasing called \emph{balanced failure biasing} (BFB), and showed the method to satisfy BRE in the absence of \rdd{so-called} High Probability Cycles (HPCs).
\citet{nakayama1996general} derived general conditions for BRE in importance sampling schemes for HRMSs.
\citet{carrasco1992failure} proposed a method called \emph{failure distance biasing}, in which the simulation measure is based on the distance from each state to the rare states. This distance notion is similar to the function $d$ discussed in Section~\ref{sec: path-based is} --- given $d$, the method applies a form of failure biasing (with the exception that if a failure does not lead to a decrease in $d$, it is not treated as a failure). The function $d$ in their setting is computed by finding the minimal cuts in the model's corresponding fault tree, which means the setting is limited (namely multicomponent systems with independent component types, and no HPCs).
\citet{carrasco2006failure} extended this approach to `unbalanced' systems.
\citet{alexopoulos2001estimating} proposed a method that is based on bounding the value of the likelihood ratios, and which has good performance for both highly reliable and highly redundant systems.
\citet{juneja2001fast} proposed a scheme --- the \emph{implementable general biasing scheme} (IGBS) --- to mitigate the effects of HPCs on the performance of BFB. 

We will use BFB and IGBS as literature benchmarks for the experiments of Section~\ref{sec: results}, so we discuss them in more detail in the following. In particular, for each state~$\state \in \ssX$, let $n_f(\state)$ be the \rda{number} of transitions leaving $\state$ with a positive $\epsilon$-order (the `\emph{failures}') and let $n_r(\state)$ be the \rda{number} of transitions leaving $\state$ with $\epsilon$-order $0$ (the `\emph{repairs}'). Given some $p>0$, the simulation measure $\Q$ of BFB is given by
\[
	q_{\statea \stateb} = \left\{ 
\begin{array}{cl} 
(n_f(\statea))^{-1} & \text{ if } n_r(\statea) = 0, \\
(n_r(\statea))^{-1} & \text{ if } n_f(\statea) = 0, \\
p (n_f(\statea))^{-1} & \text{ if } \asyrate_{\statea \stateb} > 0 \text{ and } n_r(\statea) > 0, \\
(1-p) (n_r(\statea))^{-1} & \text{ if } \asyrate_{\statea \stateb} = 0 \text{ and } n_f(\statea) > 0. 
\end{array}\right.
\]
The typical choice for $p$ is $\frac{1}{2}$, and we make the same choice in this paper. IGBS is similar to BFB, with the exception that the degree of biasing is reduced when the current state is part of an HPC. To avoid having to run a numerical procedure to detect HPCs, IGBS switches to low-intensity biasing when the previous transition was a high-probability transition (resulting in a non-Markovian simulation measure). In particular, with $q_{\statea\stateb|\statea'} = \Q(\ps{i+1} = \stateb \; | \; \ps{i} = \statea, \;\ps{i-1} = \statea')$, IGBS means:
\[
q_{\statea\stateb|\statea'}  = \left\{ 
\begin{array}{cl} 
(n_f(\statea))^{-1} & \text{ if } n_r(\statea) = 0, \\
(n_r(\statea))^{-1} & \text{ if } n_f(\statea) = 0, \\
p (n_f(\statea))^{-1} & \text{ if } \asyrate_{\statea \stateb} > 0, \asyrate_{\statea' \statea} > 0 \text{ and } n_r(\statea) > 0, \\
(1-p) (n_r(\statea))^{-1} & \text{ if } \asyrate_{\statea \stateb} = 0, \asyrate_{\statea' \statea} > 0 \text{ and } n_f(\statea) > 0, \\
\delta (n_f(\statea))^{-1} & \text{ if } \asyrate_{\statea \stateb} > 0, \asyrate_{\statea' \statea} = 0 \text{ and } n_r(\statea) > 0, \\
(1-\delta) (n_r(\statea))^{-1} & \text{ if } \asyrate_{\statea \stateb} = 0, \asyrate_{\statea' \statea} = 0 \text{ and } n_f(\statea) > 0, 
\end{array}\right.
\]
for some $\delta < p$. In the initial state, $p$ is used as a biasing constant. We choose $\delta = \frac{1}{100}$ in this paper.
Note that \rdf{the measure described above} is more general than \citet{shahabuddin1994importance}, who assumed that $\forall x \in \ssX \setminus \{\init, \gstate, \tstate\}$, $n_f(\state) > 0$ and $n_r(\state) > 0$.

In addition to the papers on Zero Variance Approximation mentioned in Section~\ref{sec: simulation}, \cite{lecuyer2011approximating} discusses the particular application of ZVA to HRMSs. We use several of the ideas therein in Section~\ref{sec: bre}. In particular, conditions are derived for a change of measure to satisfy VRE. In said paper, the analogues of $d$ and $\w$ were not obtained explicitly, but approximated using the structure of multicomponent systems. 

The \rda{basic idea underlying} Section~\ref{sec: vrff} is from \citet{juneja2007estimating}, who showed that for geometric sums of heavy-tailed random variables, a separation of the estimator into the numerical computation of a dominant component and \rda{the} simulation of the small component yields an estimator with VRE.

Other contributions involving generally applicable efficient simulation of HRMSs include \cite{budderare}, in which the notion of distance to the goal set used is the smallest possible number of transitions needed (which is equivalent to the model setting of this paper if all transitions have $\epsilon$-order 1). Another generic importance sampling technique is the \emph{cross-entropy method} (see, e.g., \citet{ridder2010asymptotic}), which we do not discuss further in this paper because of its heuristic nature.

\section{The Path-ZVA algorithm} \label{sec: dijkstra algorithm}

In this section, we describe the simulation method of this paper: Path-ZVA.
We discuss two versions: ZVA-$\ddd$ and ZVA-$\dom$, which differ in the distance measure used.
In the following, we first give a formal description of these two methods and the underlying concepts.
We then discuss their implementation, with a particular focus on the routines of Algorithms~\ref{alg: forward phase},~\ref{alg: loop detect}~and~\ref{alg: backward phase}.

\subsection{Path-based Zero Variance Approximation} \label{sec: path-based is}
\label{sec:pathzva}
Our method for finding a suitable approximation $\w$ of $\ipf$ is to select only a subset of the paths used in the summation of \eqref{eq def mu}, namely the so-called \emph{dominant} paths, as we discuss below. In order to determine which paths to select, we will define two related measures --- $\ddd$ and $\www$ --- for the distance between each state and the rare state $\gstate$. 
\rdd{Throughout this subsection, we assume that no so-called {\em High-Probability Cycle} (HPC) is present, where we define a HPC (see also Section~\ref{sec: related work}) as a cyclic path $\pth$ with~\mbox{$\pth({n_\pth})=\pth(0)$} and~$\P(\pth) = \Theta(\epsilon^0)$.
For Markov chains that do have one or more HPCs, we explain in Section~\ref{sec: explicit dijkstra} how these are removed.}

First, we define the function {$\dd:\ssX^2 \rightarrow \N$} as
\begin{equation*}
\dd(\statea,\stateb) = \min \{r : \exists \pth \in \pths(\state) \text{ with } \ps{\pthl} = \stateb, \,\,\, \forall k < \pthl : \ps{k} \notin \{\tstate, \gstate\} \; \text{ and } \; {\P}(\pth) = \Theta(\epsilon^r)\}.
\end{equation*}
Intuitively, $d(\statea,\stateb)$ is the shortest $\epsilon$-distance of any path from $\statea$ to $\stateb$.
Of particular interest are $d(\statea, \gstate)$, the shortest distance from each state $\statea \in \ssX$ to the goal state, and $d(\init,\statea)$, the shortest distance from the initial state to each state $\statea$.  

As mentioned in the introduction, we do not need to run the algorithm on the entire state space, \rdd{but} only the states that are asymptotically at most as hard to reach from $\init$ as $\gstate$, and \rdd{their} neighbours. To formalise this, we introduce the following two sets:
\begin{equation} \label{eq def core}
\core = \{\state \in \ssX : d(\init,\state) \leq d(\init, \gstate)\} \,\,\,\, \text{  and  }
\end{equation}
\vspace{-0.7cm}
\begin{equation*}
\edge = \{\state \in \ssX \setminus \core: \exists \stateb \in \core \text{ s.t. } p_{\stateb \state} > 0\}.
\end{equation*}
In words: $\core$ is the relevant part of $\ssX$, i.e., the set of states that are asymptotically not substantially less likely to \rdd{be reached} from $\init$ than $\gstate$. The set $\edge$ contains the states `bordering' $\core$, i.e., those states to which the system can jump directly from $\core$. \rdd{By construction, $d(s, x) > d(s, g)$ for all $x \in \edge$. } We assume that both \rdd{$\core$ and $\edge$} are finite --- if they are not, the numerical pre-processing \rdd{phase} will never terminate.

{The algorithm of this {paper} calculates $\dd(\init, \state)$, $\dd(\state, \gstate)$, and} $\w(\state)$ only for $\state \in \core$. 
This means that~\rdb{\eqref{eq zva def}} \rdb{cannot be applied when $\state \in \core$ and $\stateb \in \edge$}. This is remedied by \rdb{adapting $\P$ to an} alternative probability measure $\PPP$ \rdb{with high-probability `shortcuts' from $\edge$ to $\gstate$}, and \rdb{its corresponding} distance measure $\ddd$.
\rdb{First let $\ppp_{\statea\stateb}$ be defined as follows:}
\begin{equation}
  \ppp_{\statea\stateb} = \left\{\begin{array}{ccr} p_{\statea\stateb} & & \text{if } \statea \notin \edge,\\ 1 & & \text{if } \statea \in \Gamma \text{ and } \stateb = \gstate, \\ 0 & & \text{otherwise.}\end{array}\right. \label{eq def prf}
\end{equation}
\rdb{Then we let $\PPP$ and $\ddd$ be defined as $\P$ and $d$ under this new measure. Next, we define}
\begin{equation*}
\dom(\state) = \{\pth \in \ipths(\state): {\PPP}(\pth) = \Theta(\epsilon^{\ddd(\state,\gstate)})\},
\end{equation*}
 the set of paths from $\state$ to the goal state $\gstate$ that have \rdd{(under $\PPP$)} the minimal distance $\ddd(\state, \gstate)$. \rdd{As before, we compute $\ddd(\init, \state)$, $\ddd(\state, \gstate)$, and $\dom(\state)$ only for state $\state$ if $\state \in \core$.} (Note that, even though we allow $\pths(\state)$ and $\ipths(\state)$ to include `paths' that have probability zero under $\P$, such paths are never in $\Delta(x)$, since either they include one or more transitions with probability zero under $\PPP$, or they traverse $\Gamma$ and their $\epsilon$-order exceeds $\ddd(\state, \gstate)$.)
We call the paths in $\Delta(x)$ the \emph{dominant} paths from $\state$ to the goal state.
\begin{figure}
	\caption{Illustration of the sets $\core$ and $\edge$ within $\ssX$.}
	\begin{center}
		\scalebox{0.4}{\input{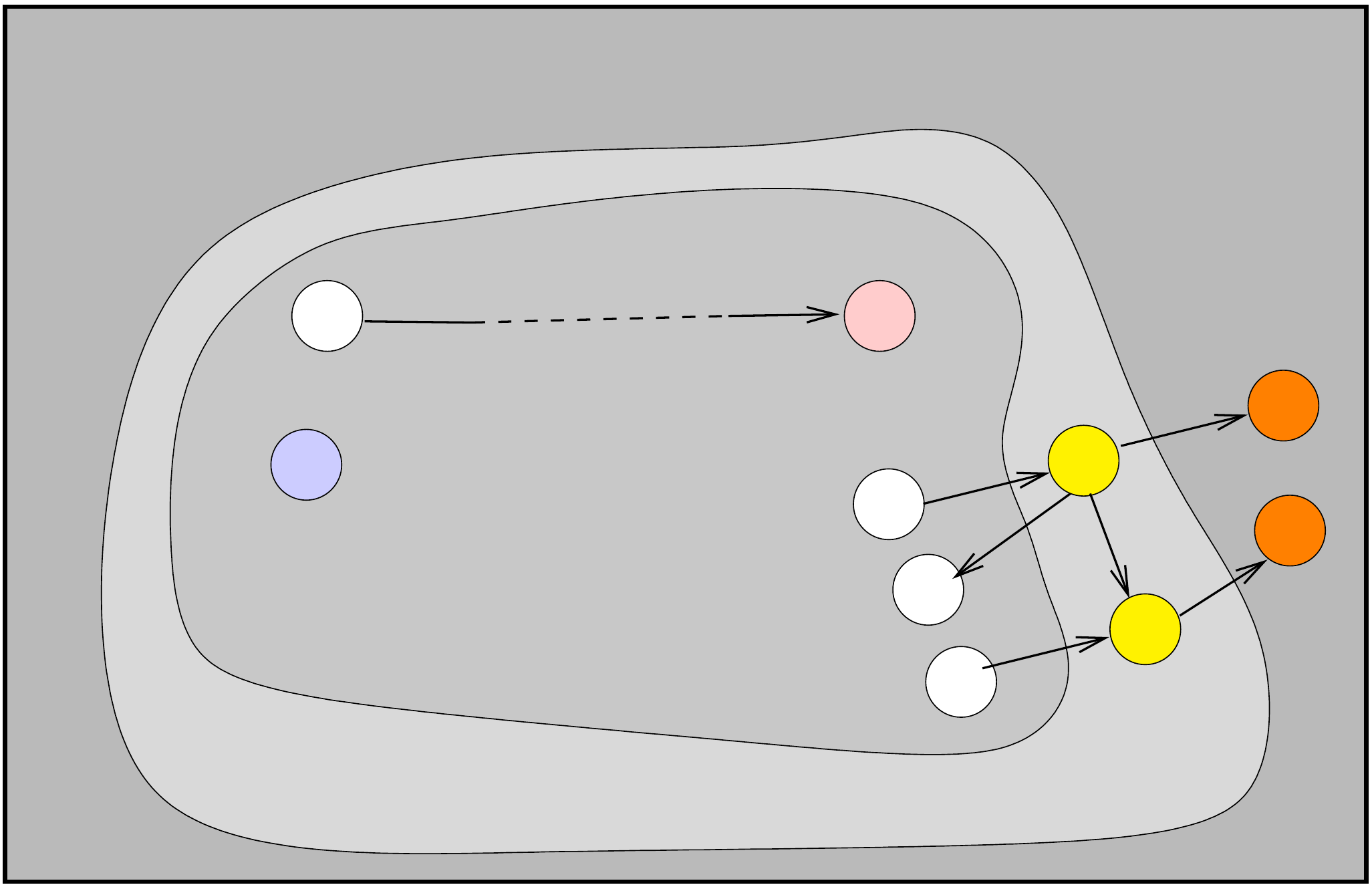_t}}
	\end{center}
\end{figure}
Finally, we define the function $\www:\ssX \rightarrow \mathbb{R}^+$ as the probability of the `dominant' paths under $\PPP$:
\begin{equation}
\www(\state) = \sum_{\pth \, \in \, \dom(\state)} {\PPP}(\pth).
\label{eqdefvdom}
\end{equation}
The function $\www$ can be substituted for $\w$ in \eqref{eq zva def} to yield a well-performing simulation
measure.  This approach will be called ZVA-$\dom$ in this paper.
Alternatively, one can \rdd{use} $\w(x)=\epsilon^{\ddd(\state, \rdd{\gstate})}$, which is easier
to compute and, as we will see in Section~\ref{sec: bre}, still yields an estimator with favourable properties.
This approach will be called ZVA-$\ddd$ in this paper.
Note that there are model settings for which techniques exist that allow for ZVA-$\ddd$ to be applied without the need to consider each individual state in~$\core$: see, e.g., \citet{reijsbergen2013automated} for an application to stochastic Petri nets. In the approach of that paper, the full state space is partitioned into `zones' such that for each zone it holds that $\ddd$ in each state is given by the same affine function of the state vector.
The performances of ZVA-$\dom$ and ZVA-$\ddd$ will be compared in Section \ref{sec: results}.

Regardless of the choice of $\w$, when we leave $\core$ during the simulation we \emph{stop using importance sampling} and revert back to standard Monte Carlo \rdb{until we reach either $\gstate$ or $\tstate$}. 
A consequence is that the simulation measure $\Q$ is now \emph{non-Markovian}: it is {only} Markovian \rdb{as long as we stay in} $\core$.
Let
\begin{equation} 
\stopis(\pth) = \min \{i \in \N : \ps{i} \notin \Lambda \mbox{ or } \ps{i} = \gstate\}.
\label{eq stopis}
\end{equation}
 Then $\Q$ is as follows \rdb{(replacing \eqref{eq zva def})}:
\begin{equation}
q_{\ps{i} \ps{i+1}} = \left\{ 
\begin{array}{cl} 
\frac{\displaystyle \rdb{\ppp}_{\ps{i} \ps{i+1}} \w(\ps{i+1})}{\displaystyle \sum_{\stateb \in \ssX} \rdb{\ppp}_{\ps{i} \stateb} \w(\stateb)} & \text{ if } i < \stopis(\pth)
\\ 
p_{\ps{i} \ps{i+1}} & \text{ if } i \geq \stopis(\pth) .
\end{array}\right. \label{eq ch4 com}
\end{equation}

\subsection{\rdc{Pre-processing: graph analysis procedure for finding $\ddd$ and $\www$}} \label{sec: explicit dijkstra}
\label{sec:graphalgorithms}

The algorithm for determining $\ddd$ and $\www$ involves the search for a shortest path in a graph, and is strongly inspired by Dijkstra's method \cite{dijkstra1959note}. The \rda{new} algorithm can be broken down into three main routines, namely Algorithms~\ref{alg: forward phase}, \ref{alg: loop detect}~and~\ref{alg: backward phase}. Unlike Dijkstra's algorithm, the algorithm of this section consists of two phases: a forward phase and a backward phase. \rda{In the forward phase, we generate the state space and remove HPCs until we have found $\gstate$ and $\core$, and in the backward phase we start in $\gstate$ and determine $\ddd$ and $\www$ by working back until we reach $\init$}. The forward phase is described in Algorithm~\ref{alg: forward phase} and the backward phase is described in Algorithm~\ref{alg: backward phase}. Algorithm~\ref{alg: loop detect} removes a detected \rdd{HPC} and is called by Algorithm~\ref{alg: forward phase}. \rdc{The run times of all the algorithms are polynomial in the size of $\core \cup \edge$}.

\subsubsection{Forward phase} \label{sec: forward phase}

In the first phase, we use a procedure based on Dijkstra's algorithm for finding shortest paths in a graph in order to determine {$\ddd(\init,\cdot)$,} $\core$ and to remove all HPCs. In particular, $\ddd(\init,\cdot)$ is used to detect the HPCs; it is denoted by \rda{$\ddd'(\cdot)$} in Algorithm~\ref{alg: forward phase} \rda{for brevity}.

\begin{algorithm} 
\caption{Forward phase. } 
\label{alg: forward phase}  
{\fontsize{10}{10}\selectfont
\begin{algorithmic}[1]
    \Require Markov chain $(\ssX,P)$ with $P = (p_{\statea \stateb})_{\statea, \stateb \in \ssX}$, source $\init$, destination $\gstate$.
    \State ${\core} := \rda{\emptyset}$
		\State $\rdc{\ddd'}(\init) := 0$, \;\; $\forall {\stateb} \in \ssX \setminus \{\init\}: \rdc{\ddd'}({\stateb}) := \infty$ \label{ln: mvc init d} %\Comment{$\rdc{\ddd'}$ is an auxiliary vector needed to determine $\core$,} \label{ln: mvc init d}
    \State $P':= P$, \;\; ${\state} := \init$
    \While{{$\rdc{\ddd'}( x) \leq \rdc{\ddd'}({\gstate})$} } \label{ln: mvc main while}
				\State $\core := \core \cup \{{\state}\}$
    		\ForAll{${\stateb} \in \ssX$ {s.t. $p_{\statea \stateb}>0$}} \label{ln: mvc main forall}
    	      \State $\rdc{\ddd'}({\stateb}) := \min(\rdc{\ddd'}({\stateb}), \rdc{\ddd'}({\state})+r_{{\state}{\stateb}})$
    	      \If{${\stateb} \in \core \text{ and } \rdc{\ddd'}({\stateb}) = \rdc{\ddd'}({\state})$} \label{ln: loop detect if}
    	      		\State $P' := $ loopDetect($(\ssX,P'),{\stateb}$) \label{ln: call loop detect}
    	      \EndIf
        \EndFor
        \State ${\state} := \arg\min \{\rdc{\ddd'}({\stateb}): {\stateb} \in \ssX \setminus \core\} $ \label{ln: choose one}  \Comment{if several states are possible} 
    \EndWhile \Comment{in line~\ref{ln: choose one}, any can be chosen}
    \State \Return $\core, P'$
\end{algorithmic}
}
\end{algorithm}

\begin{algorithm} 
\caption{loopDetect($(\ssX,P),\state'$).} 
\label{alg: loop detect}  
{\fontsize{10}{10}\selectfont
\begin{algorithmic}[1]
    \Require Markov chain $(\ssX,P)$, state $\state'$.
    \State $P':= P$
    \State $A := \emptyset, B := \emptyset$
		\State \rda{$S_A := \{\state'\}, S_B := \{\state'\}$}
		\While{ ${S_A} \neq \emptyset \text{ and } {S_B} \neq \emptyset$ } \label{ln: loopAB}
			\State {$A := A \cup S_A, B := B \cup S_B$}
		  \State {$S'_A := S_A$}, $S_A := \{ \stateb \in \ssX \setminus A  \; : \; \exists \statea \in S'_A$ s.t. $\asyrate_{\statea \stateb} = 0\}$
		  \State {$S'_B := S_B$}, $S_B := \{ \stateb \in \ssX \setminus B  \; : \; \exists \statea \in S'_B$ s.t. $\asyrate_{\stateb \statea} = 0\}$
		\EndWhile \label{ln: loopAB end}
		\While{ ${S_A} \neq \emptyset$ } \label{ln: loopA}
			\State {$A := A \cup S_A$}
		  \State {$S'_A := S_A$}, $S_A := \{ \stateb \in B \setminus A  \; : \; \exists \statea \in S'_A$ s.t. $\asyrate_{\statea \stateb} = 0\}$
		\EndWhile \label{ln: loopA end}
		\While{ ${S_B} \neq \emptyset$ } \label{ln: loopB}
		 \State {$B := B \cup S_B$}
		  \State {$S'_B := S_B$}, $S_B := \{ \stateb \in A \setminus B  \; : \; \exists \statea \in S'_B$ s.t. $\asyrate_{\stateb \statea} = 0\}$
		\EndWhile \label{ln: loopB end}
    \State $L := A \cap B$ \label{ln: def L}
    \State $D := \{\statea \in \ssX {\setminus L}: \exists \stateb \in L$ s.t. $p'_{\stateb \statea}>0\}$
    \State Solve $\left[\begin{array}{rll} \mu_{\statea \stateb} & = p_{\statea \stateb}+\sum_{\stateb' \in L}  p_{\statea \stateb'} \mu_{\stateb' \stateb}, & \forall \statea \in L, \stateb \in D, \\ 1 & = \sum_{\stateb' \in D} \mu_{\statea\stateb'}, & \forall \statea \in L \end{array}\right]$ for $\mu_{\statea \stateb}$ \label{ln: scc merge}
    \State $p'_{\statea \stateb} := \mu_{\statea \stateb}, \forall \statea \in L, \stateb \in D$, \;\; {$p'_{\statea \stateb} := 0, \forall \statea \in L, \stateb \in L$}
    \State \Return $P'$.
\end{algorithmic}
}
\end{algorithm}

 Whilst running the procedure, we iteratively update $\core$ --- this allows us to use $\core$ to keep track of the visited states. We initialise \rdc{$\core = \emptyset$} and $\rdc{\ddd'}(\init)=0$. We set the current state ${\state}$ equal to $\init$. Then, we carry out the following routine until ${\state}$ equals $\gstate$: we \rdc{add ${\state}$} to $\core$, and set $\rdc{\ddd'}({\stateb}) = \min(\rdc{\ddd'}({\stateb}), \rdc{\ddd'}({\state}) + r_{{\state} {\stateb}})$ \rdc{for each possible successor state ${\stateb}$ of ${\state}$} {--- i.e., we let the new best value for $\rdc{\ddd'}(\rdc{\stateb})$ be the minimum between the old best value and the new possible value}. We then set ${\state}$ equal to the \rdc{state~$\stateb$} that has not been considered before with the lowest value of $\rdc{\ddd'}$, and start over. When we have reached $\gstate$, we complete the procedure for all states ${\stateb}$ with $\rdc{\ddd'}({\stateb}) = \rdc{\ddd'}(\gstate)$ before we terminate the first phase. The set $\core$ then meets its definition given in \eqref{eq def core}.

If, whilst running the procedure, we find that a state ${\stateb}$ has a successor state ${\stateb}'$ such that $\rdc{\ddd'}({\stateb}) = \rdc{\ddd'}({\stateb}')$, we trigger \rdc{the} loop-detection procedure \rdc{of Algorithm~\ref{alg: loop detect}}. 
\rdd{It} essentially boils down to removing all low-probability transitions from the relevant part of the DTMC and finding the Strongly Connected Component (SCC) that contains the states ${\stateb}$ and ${{\stateb}'}$ that triggered the procedure, using the algorithm from \citet{barnat2011distributed}. Essentially, we determine $A$, {the set of states that can be reached} from ${\stateb}$ using high-probability transitions{,} and $B$, {the set of} states {from which} ${\stateb}$ {can be reached} using high-probability transitions. The relevant SCC is then $A \cap B$. 

\rdc{In Algorithm~\ref{alg: loop detect}, we find $A$ through the set $S_A$ which contains those states added to $A$ in each step. After initialising $S_A = \{\state\}$, we iteratively find those states that can be reached from the states in the previous iteration of $S_A$ (denoted by $S'_A$ in the algorithm) using high-probability transitions. We terminate when no more states can be added, i.e., when $S_A$ equals $\emptyset$. This is done in lines~\ref{ln: loopA}-\ref{ln: loopA end}; we do something similar for $B$, $S_B$ and $S'_B$ in lines~\ref{ln: loopB}-\ref{ln: loopB end}. These lines are preceded by lines~\ref{ln: loopAB}-\ref{ln: loopAB end} in which we combine $A$ and $B$. The reason behind this combined phase is that} $B$ is potentially (much) larger than $\core$ and $\edge$; it may even be infinite. In order to avoid the algorithm{'s non-termination} due to this complication we alternate between carrying out a step for $A$ and a step for $B$ \rdc{in lines~\ref{ln: loopAB}-\ref{ln: loopAB end}}. {If we can no longer find new candidates for $A$, then $A$ has been determined. Since states in the HPC need to be both in $A$ and $B$, we from then on only select candidates for $B$ that are in $A$.  We terminate if we can no longer find candidates for $B$ in $A$. The same is done for $A$ and $B$ interchanged.} This way, we always terminate in a finite amount of time {because $A \subset \core$ and $\core$ is finite}.

\begin{algorithm} 
\caption{Backward phase.} 
\label{alg: backward phase}  
{\fontsize{10}{10}\selectfont
\begin{algorithmic}[1]
    \Require Markov chain $(\core,P')$, end node $\gstate$.
    \State $\forall\stateb \in \core: $ $\,\, {\rda{\www}}(\stateb) := 0, {\rdc{\ddd}^*(\stateb) := \infty}$
    \State $\rda{\www}(\gstate) := 1, {\rdc{\ddd}^*(\gstate) := 0}$
    \State $\core' := \emptyset$, ${\statea} := \gstate$
		\State {$\edge := \{\state \in \ssX \setminus \core: \exists \stateb \in \core \text{ s.t. } p_{\stateb \state} > 0\}$}
    \While{{$\core' \neq {\core \cup \edge}$}} \label{ln: bkw main while}
		    		\State $\statea := \arg\min\{\rdc{\ddd}^*({\statea}) : {\statea} \in {(\core \cup \edge)} \setminus \core' \text{ and } \nexists {\statea}' \in {(\core \cup \edge)} \setminus \core'$ s.t. $\asyrate_{{{\statea' \statea}}} = 0$\} \label{ln: choose min} 
    		\ForAll{$\stateb \in {\core \cup \edge}$}  \label{ln: bkw main forall}\Comment{if several states are possible} 
				  \State {${\bf if} \,\,\,  \asyrate_{\stateb \statea} + \rdc{\ddd}^*(\statea) < \rdc{\ddd}^*(\stateb) \,\,\, {\bf then} \,\,\, \rda{\www}({\stateb}) := 0$} \Comment{in line~\ref{ln: choose min}, any can be chosen.}
					\State {$\rdc{\ddd}^*(\stateb) := \min(\rdc{\ddd}^*(\stateb),\asyrate_{\stateb \statea} + \rdc{\ddd}^*(\statea))$}
    				\If{$\rdc{\ddd}^*(\stateb) = \rdc{\ddd}^*({\statea}) + \asyrate_{\stateb {\statea}}$} \label{ln: if short}
    				    \State $\rda{\www}(\stateb) := \rda{\www}(\stateb) + p'_{\stateb {\statea}} \rda{\www}({\statea})$ \label{ln: update w}
    				\EndIf
    		\EndFor
    	  \State 	$\core' := \core' \cup {\{ \statea \}}$
    \EndWhile
    \State \Return ${\ddd}^*, \rda{\www}, \edge$.
\end{algorithmic}
}
\end{algorithm}

Having determined the SCC, we construct a new DTMC with the same state space and identical rare event probabilities $\ipf({\state})$ $\forall \state \in \ssX$, but with the transition probabilities of the states in the HPC redistributed. This can be done using a SCC-based state space reduction technique similar to the one described by \citet{abraham2010dtmc}, implemented in \rdc{l}ine~\ref{ln: scc merge} of Algorithm~\ref{alg: loop detect}. In our implementation, the system of equations in  \rdc{l}ine~\ref{ln: scc merge} is approximately solved using Gauss-Seidel. \rdc{Algorithm~\ref{alg: loop detect}} is repeated each time a new HPC is detected.

\subsubsection{Backward phase} \label{sec: backward phase}

In this phase, we determine $\www$ and $\ddd(\cdot,\gstate)$; the latter is denoted by \rda{$\ddd^*(\cdot)$} in Algorithm~\ref{alg: backward phase}.
We initiate the second phase in $\gstate$  (since $\gstate$ is given implicitly through a high-level description, this would not have been possible without the first phase). 
We use a list $\core'$ to keep track of the states that have been considered, and initialise $\core', \www$, and $\ddd$ as outlined in the beginning of Algorithm~\ref{alg: backward phase}. For each predecessor $\state$ of $\gstate$ that is in ${\core \cup \edge}$, we add $\state$ to $\core'$ if this had not been {done} already {and if \mbox{$\rdc{\ddd}(\state) = \asyrate_{\state \gstate}$}} we update $\rda{\www}(\state) := \rda{\www}(\state) + p'_{\state \gstate}$. We then choose the next state to consider: this is the state $\state$ in ${(\core \cup \edge)} \setminus \core'$ (i.e., {the set of states that have not yet been considered}) {for which $\rdc{\ddd}$ is the lowest} \emph{and} for which no \rda{other} state $\stateb$ {in} ${{(\core \cup \edge)}\setminus \core'}$ exists for which ${\asyrate}_{\state \stateb} = 0$. The reason is that otherwise, the probability of the paths going from $\state$ to $\stateb$ is never added to $\www(\state)$, which has a cascading effect on the predecessors of $\state$. Note that we can always find such a state only if the HPCs have been removed.
We continue performing the same procedure until we have determined $\www(\state)$ for all $\state \in {\core \cup \edge}$.

%==============================================================================================================
\subsection{Practical Aspects of the Path-ZVA Algorithm} \label{sec: practical application}

\subsubsection*{Identifying $\epsilon$ in practical models}

In principle, the algorithms described above can be applied to any DTMC with transition probabilities
that are parameterised by powers of some small parameter~$\epsilon$.
Usage of $\epsilon$-powers for the purpose of analysing the efficiency of simulation algorithms
goes back to at least~\citet{shahabuddin1994importance}.
However, in our case (and earlier, see~\citet{deboer2007estimating}) the change of measure {\em itself} depends on the $\epsilon$-powers.
This means that a practitioner who has a model with given rates/probabilities will
need to assign $\epsilon$-powers to them, which can be done in infinitely
many ways.

There are a few trivial approaches that do not work well, but are illustrative.
One is to simply set the $\epsilon$-power to 0 for all transitions,
and represent the model entirely by the pre-factors $\lambda_{\state\stateb} = p_{\state\stateb} / \epsilon^{\asyrate_{\state\stateb}}$. Then our algorithm will
treat the model as one large HPC, and the probability of interest will be computed numerically
if the state space is sufficiently small.
The other extreme is to set all pre-factors to~1, choose a value of $\epsilon$
just below~1, and represent the model entirely by (very high) exponents~$r_{xz}$.
Then the algorithm will focus the simulation effort on the single most likely path,
at the expense of paths which are only slightly (namely by a factor of $\epsilon$)
less likely, causing underestimation and/or high variance.
A third approach is to set all $\epsilon$-powers to 1, as is done by \citet{budderare}. Although
this is a more natural approach than the other two, it still does not distinguish between failures and repairs.

In typical reliability models, repair rates are several orders of magnitude higher than failure rates.
In such cases, giving component repairs $\epsilon$-order 0 and failures $\epsilon$-order 1
is typically a good choice.
If some failures are very much less likely than others (this is a feature of so-called `unbalanced' systems), higher $\epsilon$-orders
can be assigned to those to achieve further variance reduction
(see \citet[Fig.~1]{shahabuddin1994importance}).
This approach can be automated to a large extent by having the practitioner specify only $\epsilon$ beforehand,
and assigning the smallest integer $\epsilon$-power to each transition such that its pre-factor is greater than $\epsilon$. This is in fact what we have implemented and applied in Section~\ref{sec: realistic examples}.
\citet{carrasco2006failure} chooses $\epsilon$ as the ratio of the largest failure rate to the smallest repair rate. Further experimentation to establish best practice with regards to choosing $\epsilon$ is an interesting direction for further research.

\subsubsection*{Numerical Complexity}

The numerical complexity of the phases of our algorithm is as follows. Let $D$ be the maximum number of successors of all states in $\Lambda$ (this is $|\Lambda\cup\Gamma|$ at worst but usually much smaller). The loop in line~\ref{ln: mvc main while} of Algorithm~\ref{alg: forward phase} has $|\core|$ iterations, and the nested loop in line~\ref{ln: mvc main forall} has $\dim$ iterations, so the total complexity is $O(D|\core|)$. Lines \ref{ln: loopAB}-\ref{ln: loopB end} of Algorithm~\ref{alg: loop detect} have complexity $O(\max |L|)$, where $\max |L|$ denotes the size of the largest HPC plus direct predecessors and successors. Line~\ref{ln: scc merge} of Algorithm~\ref{alg: loop detect} has a complexity of $O((\max |L|)^2)$ if implemented using the approximative Gauss-Seidel algorithm. Line~\ref{ln: bkw main while} of Algorithm~\ref{alg: backward phase} has $|\core \cup \edge|$ iterations, and although the nested loop in line~\ref{ln: bkw main forall} only has to be done for the number of predecessors in each state, these two loops together will have total complexity $O(D|\core \cup \edge|)$ since the total number of incoming and outgoing transitions within $\core\cup\edge$ is the same.

In summary, the complexity of our algorithm is typically $O(D|\core \cup \edge|)$ or $O(|\core \cup \edge|^2)$.
\rdf{
This is to be compared to the cost of computing the probability of interest without simulation,
which is typically $O(D|\mathcal{U}|)$ or $O(|\mathcal{U}|^2)$,
where $\mathcal{U}$ is what remains of the full state space~$\ssX$ after collapsing
all goal states (and states that can only be reached via goal states) into a single state~$\gstate$.
}
Hence, what we gain is that we apply numerical analysis only to \mbox{$\core \cup \edge$}
\rdf{
rather than to~$\mathcal{U}$.
}
This is illustrated in Table~\ref{tab: numerical summary} for a range of models.

\begin{table}[!t]
 \centering
\tbl{Total and reduced state space sizes and the pre-processing sets $\core$ for a range of models.
More information can be found in the following sources: (R) Reijsbergen et al.\ [2013], (S) Section 6.2.1, (A) Alexopoulos and Shultes [2001], (C) Carrasco [2006].
\rdf{The $>$ 500\,000 entries for $|\mathcal{U}|$ are lower bounds established by 12 hours of computation.}%
}{
\begin{tabular}{|lc|cccc|}
\hline
\rule{0pt}{1.1em}%
Model & Source & $|\ssX|$ (total) & $|\mathcal{U}|$ & $|\core|$ & $|\edge|$ \\ \hline
\rule{0pt}{1.1em}%
$2$-node tandem queue, overflow level $n$    & (R)       & $\infty$ & $\infty$ & $O(n^2)$ & $O(n)$ \\
Distrib.\ Datab.\ Syst.\ (dedicated repair)  & many; (S) & 421\,875 & 514 & 48 & 84 \\
Distributed Database System (FCFS)           & see (S)   & 2\,123\,047\,371 & $>$ 500\,000 & 84 & 504 \\
$k$-out-of-$n$ system (homogeneous)          & (A)       & $O(n)$ & $O(k)$  & $O(k)$  & 0 \\
$k$-out-of-$n$ system (heterogeneous)        &           & $O(2^n)$ & $O(2^k)$ & $O(2^k)$ & 0 \\
Fault-Tolerant Database System               & (C)       & 14\,762\,250\,000 & 59\,051 & 87 & 1060 \\
Fault-Tolerant Control System                & (C)       &  1\,855\,425\,871\,872 & $>$ 500\,000& 116 & 2928 \\
Network with Redundancies                    & (A)       & very large & very large & \multicolumn{2}{c|}{still very large} \\
\hline
\end{tabular}}
\label{tab: numerical summary}
\end{table}

\subsubsection*{\rdf{High component reliability vs.\ high redundancy}}
\rdf{
For models whose high reliability is mostly due to high redundancy, the method tends to be less effective.
One reason is that $|\Lambda|$ is large in such models; this is apparent
in the last line, and potentially also the fifth line (depending on the value of $k$), of
Table~\ref{tab: numerical summary}.
The other reason is that when many `almost-dominant' paths exist, of
order $\epsilon^{\bar d(s,g)+1}$ or higher, their total contribution may dominate the (fewer) supposedly `dominant'
path(s) of order $\epsilon^{d(s,g)}$, if $\epsilon$ is not small enough.
This can easily happen in models of highly-redundant systems,
with e.g.\ many different possible sequences of failure and repair events
  on those almost-dominant paths, and
  $\epsilon$ tending to be larger because of larger 
  individual component failure rates. 
}

\subsubsection*{Efficient implementation}

A crude way of implementing the method would involve constructing the
entire state space and keeping track of matrices giving the transition probabilities
and $\epsilon$ powers for each combination of states.
However, this would be very memory-inefficient, or impossible in case of an infinite state space.
Specification of a model in our implementation consists only of three functions that determine, given a state:
(1) whether it is a goal state, (2) whether it is a taboo state,
and (3) three arrays specifying its successors' state indices,
the probabilities of jumping to these successors (typically implicitly through CTMC rates),
and the corresponding $\epsilon$-powers. Our implementation also allows for the last array to be omitted and the $\epsilon$-orders to be computed using a given value $\epsilon$ in the manner discussed previously.
There is no need to generate the entire state space; states only need to be considered
`on the fly', as they are encountered during pre-processing and the actual simulation.

\section{Asymptotic Performance of the Estimator} \label{sec: bre}

In this section, we consider the performance of the two versions of the estimator produced by the algorithm of Section~\ref{sec: dijkstra algorithm}.
If the estimator is based on $\www$, we show it has VRE 
(Theorem~\ref{th:vre}); if it is based on $\ddd$ (which is easier to compute), it does 
not necessarily have VRE, but it does have both BRE (Theorem~\ref{th:bre}) and 
the `Bounded Normal Approximation' property (Theorem~\ref{th:bna})
We first prove the technical {L}emmas~\ref{lm: l2}-\ref{thm: bounded moments} before proving the main theorems.

\begin{lemma} \label{lm: l2}
If $\w(\state) = \Theta(\epsilon^{\ddd(\state, \gstate)})$ uniformly in $x$,
then for all $\state \in \core$ we have that
\begin{equation*}
\sum_{z \in \ssX} \ppp_{\state \stateb} \w(\stateb) = \sum_{\stateb \in \core \cup \edge} \ppp_{\state \stateb} \Theta(\epsilon^{\ddd(\stateb, \gstate)}) = \Theta(\epsilon^{\ddd(\state, \gstate)})
\end{equation*}
uniformly in $x$.
\end{lemma}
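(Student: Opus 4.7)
The plan is to verify the two equalities in turn, treating $\state \in \core$ as fixed. For the first equality, observe that $\core \cap \edge = \emptyset$ (since $\edge \subseteq \ssX\setminus\core$), so $\state \notin \edge$, and the definition \eqref{eq def prf} gives $\ppp_{\state\stateb} = p_{\state\stateb}$ for every $\stateb$. Whenever $p_{\state\stateb} > 0$, either $\stateb \in \core$ or $\stateb \notin \core$; in the latter case $\stateb \in \edge$ by the definition of $\edge$. Hence the sum over $\ssX$ is really a sum over $\core \cup \edge$, and substituting the hypothesis $\w(\stateb) = \Theta(\epsilon^{\ddd(\stateb,\gstate)})$ (uniform in $\stateb$) yields the first equality.

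For the second equality, I would combine two simple $\Theta$-calculus facts with a Bellman-type identity. First, by the model assumption $p_{\state\stateb} = \Theta(\epsilon^{r_{\state\stateb}})$, and since $\state \notin \edge$ we also have $\ppp_{\state\stateb} = \Theta(\epsilon^{r_{\state\stateb}})$, so each summand is $\Theta(\epsilon^{r_{\state\stateb} + \ddd(\stateb,\gstate)})$. Next, because $\core \cup \edge$ is finite by assumption, this is a finite sum of $\Theta$-terms, so the sum equals $\Theta(\epsilon^{m})$ with $m \df \min_{\stateb \in \core\cup\edge,\, \ppp_{\state\stateb}>0}\bigl(r_{\state\stateb} + \ddd(\stateb,\gstate)\bigr)$. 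Uniformity in $\state$ is automatic because $\state$ ranges over the finite set $\core$.

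It then remains to identify $m$ with $\ddd(\state,\gstate)$, which is the shortest-path recurrence for $\ddd$ under $\PPP$. For the ``$\leq$'' direction, pick $\stateb^*$ achieving the minimum, concatenate the edge $\state \to \stateb^*$ (contributing $r_{\state\stateb^*}$) with an optimal $\PPP$-path from $\stateb^*$ to $\gstate$ (contributing $\ddd(\stateb^*,\gstate)$), and observe that the resulting path is a competitor in the definition of $\ddd(\state,\gstate)$. For the ``$\geq$'' direction, take any path realizing $\ddd(\state,\gstate)$, split off its first step $\state \to \stateb'$, and note that the remaining suffix is a valid path from $\stateb'$ to $\gstate$ under $\PPP$, so its $\epsilon$-order is at least $\ddd(\stateb',\gstate)$; hence $\ddd(\state,\gstate) \geq r_{\state\stateb'} + \ddd(\stateb',\gstate) \geq m$.

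The main obstacle, and the point to be careful about, is the boundary behaviour: for $\stateb \in \edge$ one has $\ddd(\stateb,\gstate)=0$ by \eqref{eq def prf}, which correctly encodes the ``free'' jump to $\gstate$ in the Bellman step above. One should also briefly address two degenerate cases that can arise when $\gstate$ or (if it lies in $\core$) $\tstate$ is taken as $\state$: for $\state = \gstate$ the self-loop gives $\sum_\stateb \ppp_{\gstate\stateb}\w(\stateb) = \w(\gstate) = \Theta(1) = \Theta(\epsilon^{\ddd(\gstate,\gstate)})$, while for $\state = \tstate$ both sides collapse to $0 = \Theta(\epsilon^{\infty})$ trivially. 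Everything else is routine $\Theta$-arithmetic.
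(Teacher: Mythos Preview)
Your proof is correct and follows essentially the same approach as the paper: reduce the sum to the finite set $\core \cup \edge$, observe that the $\epsilon$-order of a finite sum is the minimum $\epsilon$-order among its terms, and identify that minimum with $\ddd(\state,\gstate)$ via a shortest-path (Bellman) argument. The paper's version is terser---it only spells out the contradiction showing the minimum cannot drop below $\ddd(\state,\gstate)$ and leaves the matching upper bound implicit---whereas you explicitly verify both inequalities and handle the degenerate cases $\state \in \{\gstate,\tstate\}$; this extra care is harmless and arguably makes the argument more complete.
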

\begin{proof}
Since \rdb{$\ppp_{\statea \stateb} = 0$ for $\stateb \notin \core \cup \edge$ and since} $\core \cup \edge$ \rdb{is} finite,
the $\epsilon$-order of the sum equals the $\epsilon$-order of its largest element.
Let $\stateb'$ be \rdb{a} state such that $\ppp_{\state \stateb'} \Theta(\epsilon^{\ddd(\stateb', \gstate)})$
has the lowest $\epsilon$-order in the sum.
\rdb{Suppose} that its $\epsilon$-order is smaller than $\ddd(\state, \gstate)$,
then there exists a path from $\state$ via $\stateb'$ to $\gstate$ with cost lower than $\ddd(\state, \gstate)$,
which contradicts the definition of $\ddd(\state, \gstate)$\rdb{.}
The uniformity follows trivially from the finite\rdb{ness of $\core \cup \edge$}.
\end{proof}

\begin{lemma} \label{lm: l2x}
\begin{equation*}
d(\init, \gstate) = \ddd(\init, \gstate)
\end{equation*}
\end{lemma}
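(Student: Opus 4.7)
My plan is to prove the two inequalities $\ddd(\init,\gstate)\le d(\init,\gstate)$ and $\ddd(\init,\gstate)\ge d(\init,\gstate)$ separately, exploiting the fact that $\PPP$ and $\P$ agree on all transitions out of states in $\ssX\setminus\edge$, and that by definition of $\core$ every state in $\edge$ satisfies $d(\init,x)>d(\init,\gstate)$.

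For the upper bound $\ddd(\init,\gstate)\le d(\init,\gstate)$, I would start with a path $\pth\in\ipths(\init)$ achieving $\P(\pth)=\Theta(\epsilon^{d(\init,\gstate)})$. The key observation is that every intermediate state $\ps{k}$ on $\pth$ (for $0<k<\pthl$) lies in $\core$: the prefix from $\init$ to $\ps{k}$ has $\epsilon$-order at most $d(\init,\gstate)$, hence $d(\init,\ps{k})\le d(\init,\gstate)$, placing $\ps{k}\in\core$ by the definition in \eqref{eq def core}. In particular none of the intermediate states are in $\edge$, so by \eqref{eq def prf} we have $\ppp_{\ps{i-1}\ps{i}}=p_{\ps{i-1}\ps{i}}$ for every transition on $\pth$. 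Therefore $\PPP(\pth)=\P(\pth)=\Theta(\epsilon^{d(\init,\gstate)})$, which forces $\ddd(\init,\gstate)\le d(\init,\gstate)$.

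For the lower bound $\ddd(\init,\gstate)\ge d(\init,\gstate)$, I would take an arbitrary path $\pth\in\ipths(\init)$ with $\PPP(\pth)=\Theta(\epsilon^{\ddd(\init,\gstate)})$ and split into two cases. If $\pth$ uses none of the new shortcut transitions from $\edge$ to $\gstate$, then $\PPP(\pth)=\P(\pth)$, so its $\epsilon$-order is at least $d(\init,\gstate)$ by the definition of $d$. If $\pth$ does use such a shortcut, then (because every state in $\edge$ has all its $\PPP$-mass on the single transition to $\gstate$) the shortcut must occur on the final step: $\pth$ has the form $\init,\ps{1},\ldots,\ps{\pthl-1},\gstate$ with $\ps{\pthl-1}\in\edge$. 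All earlier states lie outside $\edge$, so $\PPP$ and $\P$ agree on the prefix, giving it $\epsilon$-order at least $d(\init,\ps{\pthl-1})$. The last transition contributes order $0$. But since $\ps{\pthl-1}\in\edge\subseteq\ssX\setminus\core$, by \eqref{eq def core} we have $d(\init,\ps{\pthl-1})>d(\init,\gstate)$, so the total order of $\pth$ strictly exceeds $d(\init,\gstate)$, and in particular is $\ge d(\init,\gstate)$.

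The main obstacle I anticipate is purely notational: ensuring that the taboo condition $\ps{k}\notin\{\tstate,\gstate\}$ in the definitions of $d$ and $\ddd$ is handled uniformly, and that the case analysis on whether a $\PPP$-path uses a shortcut is watertight (in particular, arguing that a shortcut, if present, must be the terminal step because $\edge$-states have no other $\PPP$-transitions). Everything else reduces to the definition of $\core$ together with the fact that $\PPP$ modifies only those transitions originating in $\edge$.
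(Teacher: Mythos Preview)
Your proposal is correct and follows essentially the same approach as the paper: both argue that a shortest $\P$-path from $\init$ to $\gstate$ stays entirely inside $\core$ (hence has the same length under $\PPP$), while any $\PPP$-path that leaves $\core$ must pass through $\edge$ and therefore already has $\epsilon$-order exceeding $d(\init,\gstate)$. Your write-up is simply more explicit in separating the two inequalities and in spelling out why a shortcut, if used, must be the terminal step.
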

\begin{proof}
By \rdb{the} definition \rdb{of $\core$}, any state $x\notin \core$ has \rdb{$d(s,x) > d(s,g)$} and \mbox{\rdb{$\ddd(s,x) > d(s,g)$}},
so any path leaving $\core$ has length \rdb{$> d(s,g)$}, both under $\P$ and $\PPP$.
Therefore, the shortest path from $s$ to $g$ under $\P$ must lie entirely inside $\core$,
and its length under $\PPP$ is $d(s,g)$ too.
\rdb{Finally}, any other path from $s$ to $g$ under $\PPP$ cannot be shorter than $d(s,g)$:
if it doesn't leave $\core$, its length is the same under $\PPP$ and $\P$,
while if it leaves $\core$, its length exceeds $d(s,g)$.
\end{proof}

\begin{lemma}  \label{thm: new lemma}
If $\w(\state) = \Theta(\epsilon^{\ddd(\state, \gstate)})$ uniformly in $\state$,
then for any path $\omega$ starting in $s$ and ending in $g$ or $\Gamma$ before leaving $\Lambda\cup \Gamma$,
we have
\[
  \frac{\P(\pth)}{\Q(\pth)} = \Theta(\epsilon^{d(\init, \gstate)})
\quad\text{and, more specifically,}\quad
  \frac{\P(\pth)}{\Q(\pth)}  \leq c_0^r \epsilon^{d(\init, \gstate)} 
\]
for some positive $c_0$, independent of $\omega$,
and with $r$ the epsilon-order of $\omega$.
\end{lemma}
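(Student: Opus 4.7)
The plan is to prove both statements by writing $\P(\omega)/\Q(\omega)$ as a telescoping product of per-transition likelihood ratios, apply Lemma~\ref{lm: l2} to bound each factor uniformly in terms of $\ddd$, and then close the gap between path length and $\epsilon$-order using the DAG structure of the order-$0$ subgraph that remains after HPC removal.

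First I would use that $\omega$ stays in $\Lambda\cup\Gamma$, so every transition $\omega(i)\to\omega(i+1)$ leaves from some $\omega(i)\in\Lambda$; on $\Lambda$, $p=\ppp$ and $\Q$ is the ZVA form of~\eqref{eq ch4 com}. This gives
\begin{equation*}
\frac{\P(\omega)}{\Q(\omega)} \;=\; \prod_{i=0}^{n-1}\frac{\sum_{z}\ppp_{\omega(i)z}\,\w(z)}{\w(\omega(i+1))}.
\end{equation*}
By Lemma~\ref{lm: l2} the numerator is $\Theta(\epsilon^{\ddd(\omega(i),g)})$ uniformly in $\omega(i)$, and by hypothesis $\w(\omega(i+1))=\Theta(\epsilon^{\ddd(\omega(i+1),g)})$ uniformly, so there are universal positive constants $c\le C$ with
\begin{equation*}
c\,\epsilon^{\ddd(\omega(i),g)-\ddd(\omega(i+1),g)} \;\le\; \frac{p_{\omega(i)\omega(i+1)}}{q_{\omega(i)\omega(i+1)}} \;\le\; C\,\epsilon^{\ddd(\omega(i),g)-\ddd(\omega(i+1),g)}.
\end{equation*}
Multiplying over $i$, the exponents telescope to $\ddd(s,g)-\ddd(\omega(n),g)$. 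Since $\omega(n)$ is $g$ or lies in $\Gamma$ (where the $\PPP$-shortcut to $g$ forces $\ddd(\omega(n),g)=0$), and $\ddd(s,g)=d(s,g)$ by Lemma~\ref{lm: l2x}, the telescoped exponent is $d(s,g)$. For each fixed $\omega$ this yields $\P(\omega)/\Q(\omega)=\Theta(\epsilon^{d(s,g)})$, which is the first claim.

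The uniform upper bound is the main obstacle, because the upper telescoping only gives $\P(\omega)/\Q(\omega)\le C^{n}\epsilon^{d(s,g)}$ and a priori the number of steps $n$ is not controlled by the $\epsilon$-order $r$: an $\omega$ may contain arbitrarily many order-$0$ transitions. The key observation is that after HPC removal the subgraph of $\epsilon$-order-$0$ transitions restricted to $\Lambda\cup\Gamma$ is acyclic, so any maximal run of consecutive order-$0$ transitions inside $\omega$ visits pairwise distinct states and therefore has length at most $|\Lambda\cup\Gamma|$. Since $\omega$ contains at most $r$ transitions of order $\ge 1$, it contains at most $r+1$ such runs, giving
\begin{equation*}
n \;\le\; r + (r+1)\,|\Lambda\cup\Gamma|.
\end{equation*}
Choosing $c_0$ large enough to absorb the resulting $|\Lambda\cup\Gamma|$-dependent constants (e.g.\ $c_0=C^{|\Lambda\cup\Gamma|+2}$, noting that the non-trivial paths satisfy $r\ge d(s,g)\ge 1$ so any fixed multiplicative overhead can be folded into $c_0^r$) yields $C^n\le c_0^r$ uniformly in $\omega$, from which $\P(\omega)/\Q(\omega)\le c_0^r\,\epsilon^{d(s,g)}$ follows.
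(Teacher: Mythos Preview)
Your argument is correct and follows the same route as the paper: telescope the per-step likelihood ratio using Lemma~\ref{lm: l2} and the hypothesis on $\w$, then invoke Lemma~\ref{lm: l2x} together with $\ddd(\cdot,g)=0$ on $\{g\}\cup\Gamma$ to obtain the $\Theta(\epsilon^{d(s,g)})$ conclusion. For the uniform upper bound, the paper simply states that finiteness of $\Lambda$ and absence of HPCs bound the likelihood ratio accumulated between consecutive transitions of $\epsilon$-order $\geq 1$; your version makes this explicit by bounding each maximal order-$0$ run by $|\Lambda\cup\Gamma|$ via acyclicity and then folding the resulting exponent into $c_0^r$ using $r\geq d(s,g)\geq 1$, which is exactly the content of the paper's one-line justification.
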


\begin{proof}
Observe that
\begin{eqnarray*}
\frac{\P(\pth)}{\Q(\pth)}
= \prod_{i=1}^{\pthl} \frac{\ppp_{{\ps{i-1}}{\ps{i}}}}{q_{{\ps{i-1}}{\ps{i}}}} 
& = & \prod_{i=1}^{\pthl} \frac{{\sum_{\stateb \in \ssX} \ppp_{\ps{i-1} \stateb} v(\stateb)}}{{v(\ps{i})}} \\
& = & \prod_{i=1}^{\pthl} \frac{\Theta(\epsilon^{\ddd(\ps{i-1}, \gstate)})}{\Theta(\epsilon^{\ddd(\ps{i}, \gstate)})}
  = \frac{\Theta(\epsilon^{\ddd(\init, \gstate)})}{\Theta(\epsilon^{\ddd(\ps{\pthl}, \gstate)})}
  = \Theta(\epsilon^{d(\init, \gstate)}).
\end{eqnarray*}
The second equality follows directly from \eqref{eq ch4 com},
the third equality from the lemma's assumption and Lemma~\ref{lm: l2},
and the last equality from Lemma~\ref{lm: l2x}. 

The second more specific result follows by observing that since the set $\Lambda$ is finite
and contains no high-probability cycles, there is an upper bound on how much likelihood ratio can
be accumulated between between two transitions of $\epsilon$-order $\geq 1$.
\end{proof}

\begin{lemma}  \label{thm: bounded moments}
If $v(\state) = \Theta(\epsilon^{\ddd(\state, \gstate)})$ \rdb{uniformly in $\state$}, then with $\Q(\pth)$
according to~\eqref{eq ch4 com}, we have \rdd{for any real-valued $k \geq 1$}
\[
  \E_{\Q} (\likrat^k \cdot {\bf 1}_{\tot} ) = \Theta(\epsilon^{k d(\init, \gstate)}).
\]
\end{lemma}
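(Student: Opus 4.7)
The plan is to start from the change-of-measure identity $\likrat\cdot\Q=\P$, which gives
\[
\E_{\Q}(\likrat^{k}\cdot\i_{\tot}) \;=\; \sum_{\pth\in\tot}\likrat(\pth)^{k-1}\,\P(\pth),
\]
and then establish matching $\Theta$-bounds from above and below. I would decompose every $\pth\in\tot$ into its prefix $\pth^{(1)}$ (the segment from $\init$ up to either the first exit from $\core$ or arrival at $\gstate$) and its suffix $\pth^{(2)}$. By the definition of $\Q$ in~(\ref{eq ch4 com}) the suffix is sampled under~$\P$, so the likelihood-ratio factor coming from $\pth^{(2)}$ is $1$ and $\likrat(\pth)=\likrat(\pth^{(1)})$.

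For the lower bound I would retain a single dominant path $\pth^{*}\in\dom$. As noted after~\eqref{eqdefvdom}, dominant paths never traverse $\edge$, so $\pth^{*}$ stays inside $\core$ and thus $\P(\pth^{*})=\PPP(\pth^{*})=\Theta(\epsilon^{\ddd(\init,\gstate)})=\Theta(\epsilon^{d(\init,\gstate)})$ using Lemma~\ref{lm: l2x}. Lemma~\ref{thm: new lemma} gives $\likrat(\pth^{*})=\Theta(\epsilon^{d(\init,\gstate)})$. Dropping all non-negative remaining terms, the contribution of $\pth^{*}$ alone already yields $\Theta(\epsilon^{k\,d(\init,\gstate)})$.

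For the upper bound I would invoke the more specific estimate of Lemma~\ref{thm: new lemma}, namely $\likrat(\pth^{(1)})\le c_{0}^{\,r(\pth^{(1)})}\,\epsilon^{d(\init,\gstate)}$ with $r(\pth^{(1)})$ the $\epsilon$-order of the prefix. Summing over suffixes via the Markov property (so the suffix contributes a factor at most~$1$) and using a uniform prefactor bound $\P(\pth^{(1)})\le C\,\epsilon^{r(\pth^{(1)})}$ that follows because pre-factors are bounded and each prefix contains $O(r(\pth^{(1)})+1)$ transitions in the HPC-free chain on $\core\cup\edge$, I would obtain
\[
\E_{\Q}(\likrat^{k}\cdot\i_{\tot}) \;\le\; \epsilon^{(k-1)\,d(\init,\gstate)}\sum_{r\,\ge\,d(\init,\gstate)}N_{r}\bigl(c_{0}^{\,k-1}\,C\,\epsilon\bigr)^{\!r},
\]
where $N_{r}$, the number of prefixes of $\epsilon$-order~$r$, grows at most geometrically in~$r$ because prefix length grows at most linearly in~$r$ and the per-state branching is bounded by some $D$. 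For $\epsilon$ small enough this is a convergent geometric series whose index starts at $d(\init,\gstate)$, yielding $O(\epsilon^{k\,d(\init,\gstate)})$ as desired. This argument only uses $\likrat\ge 0$ and hence works for arbitrary real $k\ge 1$.

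The main obstacle is controlling the combinatorial explosion in $N_{r}$ together with the growth of $c_{0}^{\,r}$ that is inherent to the non-uniform constants in Lemma~\ref{thm: new lemma}. Both are tamed by two ingredients secured earlier: the absence of HPCs (guaranteed by the preprocessing of Section~\ref{sec: explicit dijkstra}), which forces every cycle in the chain restricted to $\core\cup\edge$ to contribute positively to the $\epsilon$-order and hence bounds prefix length linearly in $r$; and the finiteness of $\core\cup\edge$, which bounds the per-step branching $D$ and keeps the uniform constants in the geometric-series bounds well defined.
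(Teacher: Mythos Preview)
Your proposal is correct and follows essentially the same approach as the paper: both obtain the lower bound by restricting to dominant paths and invoking Lemma~\ref{thm: new lemma}, and both obtain the upper bound by truncating paths at the first exit from~$\core$, applying the bound $\likrat\le c_0^{\,r}\epsilon^{d(\init,\gstate)}$ from Lemma~\ref{thm: new lemma}, and controlling the sum over $\epsilon$-orders~$r$ via a geometric-series argument grounded in HPC-freeness and the finiteness of~$\core\cup\edge$. The only difference is presentational---the paper bounds the aggregate $\P(\bar\Phi^{r})=\Theta(\epsilon^{r})$ whereas you count prefixes explicitly via~$N_r$---and there is a minor slip in your text where $\P(\pth^{(1)})\le C\,\epsilon^{r}$ should read $C^{r}\epsilon^{r}$ to be consistent with your displayed formula.
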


\begin{proof}
Start by calculating an upper bound on the $k$'th moment (see below for explanation):
\begin{equation}
\begin{split}
\label{eqgeomseriesarg}
\E_{\Q} (\likrat^k \cdot {\bf 1}_{\tot} )  
&= \sum_{\pth \in \ipths(\init)} \Q(\pth) \left( \frac{\P(\pth)}{\Q(\pth)}\right)^k
= \sum_{r=d(s,g)}^\infty \sum_{\pth \in \ipths^{r} (s)} \P(\pth) \left( \frac{\P(\pth)}{\Q(\pth)}\right)^{k-1}
\\&
\leq \sum_{r=d(s,g)}^\infty \sum_{\pth \in \bar\Phi^{r} (s)} \P(\pth) \left( \frac{\P(\pth)}{\Q(\pth)}\right)^{k-1}
\\&
\leq \sum_{r=d(s,g)}^\infty \sum_{\pth \in \bar\Phi^{r} (s)} \P(\pth) \left( c_0^r \epsilon^{d(s,g)} \right)^{k-1}
\\&
= \epsilon^{d(s,g)\cdot(k-1)} \sum_{r=d(s,g)}^\infty \P(\bar\Phi^r) \left( c_0^r \right)^{k-1}
\leq c_1 \epsilon^{k d(s,g)}
.
\\
\end{split}
\end{equation}
where $c_0$ and $c_1$ are positive constants,
and $\bar\Phi^r$ is like $\Phi^r$, but with paths ending at their first visit to $\{g\} \cup \Gamma$ rather than at~$g$.
Since paths reaching or passing through~$\Gamma$ have at least $\epsilon$-order $d(s,g)+1$ by definition of $\Gamma$,
it follows that for any path $\omega \in \cup_{r\geq d(s,g)+1} \Phi^r(s)$,
the path $\omega' = (\omega_0, \omega_1, \dots, \omega_{m(\omega)})$ is in $\cup_{r\geq d(s,g)+1} \bar\Phi^r(s)$,
with $m(\omega)$ as defined in \eqref{eq stopis};
and together with $\P/\Q = 1$ for steps on a path beyond $\Gamma$, this motivates the first inequality.
The second inequality follows from Lemma~\ref{thm: new lemma}.
The third inequality is established by observing that $\P(\bar\Phi^r(s)) = \Theta(\epsilon^r)$,
which is not trivial, since an infinite number of subdominant paths
could conceivably contribute more than something that is $\Theta(\epsilon^r)$, but the bound follows from
the finiteness of and the absence of HPCs in $\Lambda\cup\Gamma$, and a geometric series argument as used in the
proof of Theorem 1 of \citet{lecuyer2011approximating} (and in Lemma 5.6 of \citet{reijsbergen2013efficient}).

A lower bound on the $k$'th moment is found by restricting the summation to only the dominant paths:
\begin{equation}
\E_{\Q} (\likrat^k \cdot {\bf 1}_{\tot} ) 
\geq \sum_{\pth \in \dom(\init)} \Q(\pth) \left( \frac{\P(\pth)}{\Q(\pth)}\right)^k
\geq c_2 \epsilon^{k d(\init, \gstate)} \sum_{\pth \in \dom(\init)} \Q(\pth)
\geq c_3 \epsilon^{k d(\init, \gstate)}
\label{eq l5e}
\end{equation}
where the last equality uses Lemma~\ref{thm: new lemma}, in essence saying that
under $\Q$, the dominant paths have total probability $\Theta(1)$. \rdb{In \eqref{eq l5e}, $c_2$ and $c_3$ are positive constants.}
\end{proof}

\begin{theorem}
\label{th:bre}
If $v(x) = \Theta(\epsilon^{\ddd(x, \gstate)})$ \rdb{uniformly in $\state$}, then 
the estimator based on $v$ and $\Q$ according to~\eqref{eq ch4 com}
has BRE:
\[
\frac{\vr_{\mathbb{Q}}(\likrat \cdot {\bf 1}_{\tot})}{\mathbb{E}^2_{\mathbb{Q}}(\likrat \cdot {\bf 1}_{\tot})} = O(1). 
\]
\end{theorem}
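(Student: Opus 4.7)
The plan is to derive this essentially as a direct corollary of Lemma~\ref{thm: bounded moments}, which already controls all moments of $\likrat \cdot {\bf 1}_{\tot}$ up to the exact $\epsilon$-order. The key observation is that BRE is a statement about the ratio of two quantities whose $\epsilon$-orders are given by Lemma~\ref{thm: bounded moments} for $k=1$ and $k=2$, and these orders match.

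First I would expand the variance in the usual way:
\[
\vr_{\Q}(\likrat \cdot {\bf 1}_{\tot}) = \E_{\Q}(\likrat^2 \cdot {\bf 1}_{\tot}) - \E^2_{\Q}(\likrat \cdot {\bf 1}_{\tot}) \leq \E_{\Q}(\likrat^2 \cdot {\bf 1}_{\tot}).
\]
Then apply Lemma~\ref{thm: bounded moments} with $k=2$ to bound the second moment by $\Theta(\epsilon^{2 d(\init,\gstate)})$.

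For the denominator, I use unbiasedness of the importance sampling estimator (guaranteed because $\Q$ assigns positive probability to every transition used by paths in $\ipths$, so the Radon–Nikodym condition holds), which gives $\E_{\Q}(\likrat \cdot {\bf 1}_{\tot}) = \ipf$. Applying Lemma~\ref{thm: bounded moments} with $k=1$ (or equivalently the lower bound argument in its proof, which already shows that $\ipf = \Theta(\epsilon^{d(\init,\gstate)})$ since the dominant paths alone contribute $\Theta(\epsilon^{d(\init,\gstate)})$), we obtain $\E^2_{\Q}(\likrat \cdot {\bf 1}_{\tot}) = \Theta(\epsilon^{2 d(\init,\gstate)})$.

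Combining,
\[
\frac{\vr_{\Q}(\likrat \cdot {\bf 1}_{\tot})}{\E^2_{\Q}(\likrat \cdot {\bf 1}_{\tot})} \leq \frac{\E_{\Q}(\likrat^2 \cdot {\bf 1}_{\tot})}{\E^2_{\Q}(\likrat \cdot {\bf 1}_{\tot})} = \frac{\Theta(\epsilon^{2d(\init,\gstate)})}{\Theta(\epsilon^{2d(\init,\gstate)})} = O(1),
\]
which is exactly BRE. There is no real obstacle here since all the heavy lifting was done in Lemma~\ref{thm: bounded moments}; the only thing to be slightly careful about is to point out that the hypothesis $\w(\state) = \Theta(\epsilon^{\ddd(\state,\gstate)})$ uniformly in $\state$ is what lets us invoke that lemma for both $k=1$ and $k=2$, so the constants in the $\Theta$'s are independent of $\epsilon$ and the ratio is genuinely bounded as $\epsilon \downarrow 0$.
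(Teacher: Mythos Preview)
Your proposal is correct and follows exactly the paper's approach: the paper's proof is a one-liner stating that the result is immediate from $\vr(X)=\E X^2-\E^2 X$ and Lemma~\ref{thm: bounded moments}. Your write-up is simply a more detailed unpacking of that same argument, applying the lemma at $k=1$ and $k=2$.
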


\begin{proof}
	\rdd{Immediate by using $\vr(X)=\mathbb{E}X^2 - \mathbb{E}^2 X$ and applying Lemma~\ref{thm: bounded moments}.}
\end{proof}

%---------------------------------------------------------------------------------------------------------

\begin{theorem}
\label{th:vre}
If $v(x) =\sum_{\pth \in \dom(x)} \PPP(\pth)$, then 
the estimator based on $v$ and $\Q$ according to~\eqref{eq ch4 com}
has VRE:
\[
\lim_{\epsilon\downarrow 0}
\frac{\vr_{\mathbb{Q}}(\likrat \cdot {\bf 1}_{\tot})}{\mathbb{E}^2_{\mathbb{Q}}(\likrat \cdot {\bf 1}_{\tot})} = 0. 
\]
\end{theorem}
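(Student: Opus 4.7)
The plan is to decompose $\E_\Q(L^2\cdot \mathbf{1}_\Phi) = \sum_{\pth\in\Phi(s)} \P(\pth)^2/\Q(\pth)$ into contributions from dominant paths $\Delta(s)$ and from subdominant paths $\Phi(s)\setminus\Delta(s)$, and to show that (i) the dominant contribution equals $\www(s)^2\,(1+O(\epsilon))$, while (ii) the subdominant contribution is $o(\epsilon^{2d(s,g)})$. Combined with $\pi(s)^2 = \www(s)^2\,(1+O(\epsilon))$, this yields VRE.

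For (i), the key observation is that with $v=\www$, the ZVA formula almost telescopes on dominant paths. Any transition $x\to z$ on a dominant path satisfies $\ddd(x,g)=\asyrate_{xz}+\ddd(z,g)$, so $\www(x)=\sum_{z:\,\ddd(x,g)=\asyrate_{xz}+\ddd(z,g)}\ppp_{xz}\www(z)$; the ZVA denominator sums over \emph{all} successors, but the extra terms have strictly higher $\epsilon$-order. Using the uniform asymptotics $\www(x)=\Theta(\epsilon^{\ddd(x,g)})$ (Lemma~\ref{lm: l2}) this yields $\sum_z \ppp_{xz}\www(z) = \www(x)\,(1+O(\epsilon))$ uniformly in $x\in\Lambda$. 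Since any path that reaches $\Gamma$ has $\epsilon$-order strictly above $d(s,g)$ (by definition of $\Gamma$ and $\core$), every dominant path stays inside $\Lambda$; and since HPCs have been removed, every cycle has positive $\epsilon$-order, so a path of fixed $\epsilon$-order $d(s,g)$ has bounded length (at most $|\Lambda|$). Thus the telescoped product of $(1+O(\epsilon))$ factors is itself $1+O(\epsilon)$, and using $\www(g)=1$ we obtain $\P(\pth)/\Q(\pth)=\www(s)\,(1+O(\epsilon))$ for every $\pth\in\Delta(s)$. Summing and invoking $\sum_{\pth\in\Delta(s)}\P(\pth)=\www(s)$ (since $\P=\PPP$ on $\Lambda$) gives the dominant estimate.

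For (ii), I apply the sharper bound $\P(\pth)/\Q(\pth)\le c_0^r\epsilon^{d(s,g)}$ from Lemma~\ref{thm: new lemma} to obtain
\[
\sum_{\pth\in\Phi(s)\setminus\Delta(s)} \frac{\P(\pth)^2}{\Q(\pth)} \;\le\; \epsilon^{d(s,g)}\sum_{r\ge d(s,g)+1}\P(\bar{\ipths}^r(s))\,c_0^r,
\]
and then invoke the same geometric-series argument used at the end of the proof of Lemma~\ref{thm: bounded moments} (together with $\P(\bar\Phi^r(s))=\Theta(\epsilon^r)$) to conclude this is $O(\epsilon^{2d(s,g)+1})$. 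For the mean, the identical dominant/subdominant split yields $\pi(s)=\www(s)+O(\epsilon^{d(s,g)+1})=\www(s)(1+O(\epsilon))$, hence $\pi(s)^2=\www(s)^2(1+O(\epsilon))$.

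Putting the pieces together, $\vr_\Q(L\cdot\mathbf{1}_\Phi)=\E_\Q(L^2\cdot\mathbf{1}_\Phi)-\pi(s)^2 = \www(s)^2\,O(\epsilon)+O(\epsilon^{2d(s,g)+1}) = O(\epsilon)\,\pi(s)^2$, which indeed vanishes relative to $\pi(s)^2$ as $\epsilon\downarrow 0$. The main technical hurdle is step (i): justifying the uniformity of the $(1+O(\epsilon))$ factor at each telescoping step and, crucially, controlling the length of dominant paths. Both depend on the finiteness of $\Lambda$ and on the explicit removal of HPCs in the pre-processing phase — without HPC removal the telescoping product could pick up unboundedly many $(1+O(\epsilon))$ factors and the argument would collapse.
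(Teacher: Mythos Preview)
Your proposal is correct and follows essentially the same approach as the paper's proof: both split into dominant and subdominant contributions, use the telescoping identity $\sum_z \ppp_{xz}\www(z)=\www(x)(1+O(\epsilon))$ (valid precisely because $v=\www$ is the dominant-path sum) together with the bounded length of dominant paths in $\Lambda$ after HPC removal, and handle the subdominant part via the geometric-series argument from Lemma~\ref{thm: bounded moments}. The only cosmetic difference is that the paper carries out the dominant/subdominant computation for all moments $k\geq 1$ simultaneously, obtaining $\E_\Q(L_\Q^k\cdot\mathbf{1}_\Delta)=v^k(s)(1+O(\epsilon))$ and $\E_\Q(L_\Q^k\cdot\mathbf{1}_{\Phi\setminus\Delta})=\epsilon\cdot O(v^k(s))$, whereas you treat $k=1$ and $k=2$ separately; the substance is the same.
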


\begin{proof}
By the same argument as in \eqref{eqgeomseriesarg}, we compute, for some positive $c_4$ \rdd{and any real-valued $k \geq 1$},
\[
\E_{\Q} (\likrat^k \cdot {\bf 1}_{\tot \setminus \dom} ) 
= \sum_{r=1+d(\init,g)}^\infty \; \sum_{\pth \in \ipths^r (\init)} \Q(\pth) \left( \frac{\P(\pth)}{\Q(\pth)}\right)^k
\leq \rdb{c_4} \epsilon^{1+k d(\init,\gstate)}
= O(\epsilon^{1+k d(\init,\gstate)})
= \epsilon \cdot O(v^k(\init)).
\]
Furthermore:
\[
\begin{split}
\E_{\Q} (\likrat^k \cdot {\bf 1}_{\dom} ) 
 &= \sum_{\pth \in \dom (\init)} \P(\pth) \left( \frac{\P(\pth)}{\Q(\pth)}\right)^{k-1} = \sum_{\pth \in \dom (\init)} \P(\pth) \left(
      \prod_{i=1}^{\pthl} \frac{\sum_{\stateb \in \ssX} \ppp_{\ps{i-1} \stateb} v(\stateb)}{v(\ps{i})} 
    \right)^{k-1}
 \\
 &= \sum_{\pth \in \dom (\init)} \P(\pth) \left(
      \prod_{i=1}^{\pthl} \frac{v(\ps{i-1})}{v(\ps{i})} 
      (1+O(\epsilon))^{\pthl}
    \right)^{k-1}
 \\
 &= \sum_{\pth \in \dom (\init)} \P(\pth) \left( \frac{v(\init)}{v(g)} \right)^{k-1} \cdot (1+O(\epsilon))
  = v^k(\init) \cdot (1+O(\epsilon))
 .
\end{split}
\]
The second equality uses \eqref{eq ch4 com}, noting that for dominant paths $\pthl =m(\omega)$;
the third equality uses the fact that $v(z)$ is the sum of the dominant paths;
and in the fourth equality \rdb{$(1+O(\epsilon))^{\pthl} = 1+O(\epsilon)$} is justified because $\pthl$ is finite,
as it is bounded from above by the maximum length of a dominant path through the finite set of states $\core$.
Comparing the above two results,
we see that the contribution of the dominant paths dominates for all moments of the estimator.
Hence:
\[
\frac{\vr_{\mathbb{Q}}(\likrat \cdot {\bf 1}_{\tot})}{\mathbb{E}^2_{\mathbb{Q}}(\likrat \cdot {\bf 1}_{\tot})}
= \frac{\E_{\mathbb{Q}}(\likrat^2 \cdot {\bf 1}_{\tot})}{\mathbb{E}^2_{\mathbb{Q}}(\likrat \cdot {\bf 1}_{\tot})} - 1
= O(\epsilon).
\]
Note that we cannot simply invoke Theorem~1 from \citet{lecuyer2011approximating}, because
we have changed the model outside $\core$.
\end{proof}

%---------------------------------------------------------------------------------------------------------

\begin{theorem}
\label{th:bna}
If $v(x) = \Theta(\epsilon^{\ddd(x, \gstate)})$  \rdb{uniformly in $\state$ and} 
if the estimator based on $v$ and $\Q$ according to~\eqref{eq ch4 com}
does \textbf{not} have vanishing relative error,
\rdb{then} it has the Bounded Normal Approximation (BNA) property:
\[
\frac{\E_\Q(| \likrat \cdot {\bf 1}_{\tot} - \E_\Q (\likrat \cdot {\bf 1}_{\tot}) |^3)}{(\vr_\Q (\likrat \cdot {\bf 1}_{\tot}))^{\frac{3}{2}}} = O(1). 
\]
\end{theorem}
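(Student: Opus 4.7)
The plan is to bound the numerator by absolute moments of $\likrat \cdot \mathbf{1}_{\tot}$ itself (using non-negativity), then apply Lemma~\ref{thm: bounded moments} with $k=1,2,3$ to determine all $\epsilon$-orders, and finally combine with the no-VRE hypothesis to pin down the $\epsilon$-order of the denominator.

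First I would write $X \df \likrat \cdot \mathbf{1}_{\tot}$, which is non-negative, and use the elementary inequality $|X - \E_\Q X|^3 \leq (X + \E_\Q X)^3 \leq 8\, \max(X, \E_\Q X)^3 \leq 8(X^3 + (\E_\Q X)^3)$. Taking $\E_\Q$ on both sides gives
\[
\E_\Q(|X - \E_\Q X|^3) \;\leq\; 8\bigl(\E_\Q X^3 + (\E_\Q X)^3\bigr).
\]
By Lemma~\ref{thm: bounded moments} applied with $k=3$, $\E_\Q X^3 = \Theta(\epsilon^{3 d(\init,\gstate)})$; and since the estimator is unbiased, $\E_\Q X = \ipf = \Theta(\epsilon^{d(\init,\gstate)})$ by Lemma~\ref{thm: bounded moments} with $k=1$ (or directly). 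Hence the numerator is $O(\epsilon^{3 d(\init, \gstate)})$.

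Next I would tackle the denominator. By Lemma~\ref{thm: bounded moments} with $k=2$, $\E_\Q X^2 = \Theta(\epsilon^{2d(\init,\gstate)})$, and $(\E_\Q X)^2 = \ipf^2 = \Theta(\epsilon^{2d(\init,\gstate)})$, so \emph{a priori} $\vr_\Q(X) = \E_\Q X^2 - (\E_\Q X)^2 = O(\epsilon^{2d(\init,\gstate)})$. The main (minor) obstacle is that cancellation between these two $\Theta(\epsilon^{2d(\init,\gstate)})$ terms could in principle make the variance of strictly smaller order, which would blow up the ratio. This is exactly where the hypothesis of \emph{no} vanishing relative error is used: $\vr_\Q(X)/\ipf^2 \not\to 0$ as $\epsilon \downarrow 0$ forces $\vr_\Q(X) = \Theta(\epsilon^{2d(\init,\gstate)})$, and therefore $(\vr_\Q(X))^{3/2} = \Theta(\epsilon^{3d(\init,\gstate)})$.

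Putting the two estimates together, the ratio in the statement is $O(\epsilon^{3d(\init,\gstate)})/\Theta(\epsilon^{3d(\init,\gstate)}) = O(1)$, which is BNA. The entire argument is essentially a bookkeeping exercise on $\epsilon$-orders once Lemma~\ref{thm: bounded moments} is in hand; the only non-routine point is the use of the no-VRE assumption to rule out variance cancellation, so I would state this explicitly in the write-up.
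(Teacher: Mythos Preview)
Your proof is correct and follows essentially the same approach as the paper: bound $|X-\E_\Q X|^3$ by $(X+\E_\Q X)^3$ using non-negativity, control the resulting moments via Lemma~\ref{thm: bounded moments}, and invoke the no-VRE hypothesis to upgrade $\vr_\Q(X)=O(\epsilon^{2d(\init,\gstate)})$ to $\Theta(\epsilon^{2d(\init,\gstate)})$. The only cosmetic difference is that the paper expands $(a+b)^3$ into four terms rather than using your $8(X^3+(\E_\Q X)^3)$ bound.
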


\begin{proof}
Observe that in general for any positive $a$ and $b$,
it holds that
$|a-b|^3 \leq \rdb{(a+b)^3} =  a^3 +3a^2 b +3ab^2 +b^3$.
Applying this to the numerator, we find it is upper-bounded by the sum of four expectation terms, each of which is
of order $O(\epsilon^{3 d(\init, \gstate)})$ by Lemma~\ref{thm: bounded moments},
\rdb{so the same holds for the numerator as a whole}.

For the denominator we find, again using Lemma~\ref{thm: bounded moments}:
\[
\vr_\Q (\likrat \cdot {\bf 1}_{\tot})
=
\E_\Q( \likrat^2 \cdot {\bf 1}_{\tot}) - \E^2_\Q (\likrat \cdot {\bf 1}_{\tot}) 
=
O(\epsilon^{2 d(\init, \gstate)})
\]
If the variance does not vanish (condition of the theorem),
the latter $O$ can be replaced by $\Theta$, completing the proof.
\end{proof}

\begin{corollary}
ZVA-$\dom$ has VRE and ZVA-$\ddd$ has BRE.
\label{th:zva}
\end{corollary}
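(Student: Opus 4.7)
The plan is to obtain the corollary by direct reduction to the two main theorems of the section, matching the two versions of Path-ZVA to the hypotheses on $v(\cdot)$.

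First I would handle ZVA-$\dom$. By definition this version uses the approximation $v(x) = \www(x) = \sum_{\pth \in \dom(x)} \PPP(\pth)$, which is precisely the form assumed in the hypothesis of Theorem~\ref{th:vre}. Since the simulation measure $\Q$ in \eqref{eq ch4 com} is exactly the one considered in that theorem, the VRE conclusion applies verbatim. There is essentially nothing to check beyond observing that the definitions match.

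Next I would handle ZVA-$\ddd$. This version uses $v(x) = \epsilon^{\ddd(x,\gstate)}$, which trivially satisfies $v(x) = \Theta(\epsilon^{\ddd(x,\gstate)})$ since $v(x)/\epsilon^{\ddd(x,\gstate)} \equiv 1$; the uniformity in $x$ is immediate because the ratio is identically $1$, so any $0 < a < 1 < b$ work for every $x$. This places ZVA-$\ddd$ in the exact setting of Theorem~\ref{th:bre}, yielding BRE.

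I do not anticipate any real obstacle: both parts of the corollary are cosmetic restatements of Theorems~\ref{th:bre} and~\ref{th:vre} once the two concrete choices of $v$ are identified with the abstract hypotheses. The only mild point worth spelling out is the uniformity of $v(x) = \epsilon^{\ddd(x,\gstate)}$ with respect to $x$, which is needed for Lemmas~\ref{lm: l2} and~\ref{thm: new lemma} on which Theorem~\ref{th:bre} rests; this is handled by the trivial constant-ratio observation above. Hence the proof reduces to two one-line invocations.
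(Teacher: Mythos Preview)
Your proposal is correct and mirrors the paper's own proof almost exactly: the paper simply observes that ZVA-$\dom$'s choice $v(x)=\www(x)$ matches the hypothesis of Theorem~\ref{th:vre}, and that ZVA-$\ddd$'s choice $v(x)=\epsilon^{\ddd(x,\gstate)}$ ``clearly satisfies'' the hypothesis of Theorem~\ref{th:bre}. Your added remark that the ratio is identically~$1$ (hence uniformity is trivial) is a reasonable elaboration of what the paper leaves implicit.
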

\begin{proof}
ZVA-$\dom$ uses $v(x)=v^\dom(x)$ from \eqref{eqdefvdom},
which by definition (and by construction in the algorithms of Section~\ref{sec:graphalgorithms})
satisfies the requirement of Theorem~\ref{th:vre}.
Similarly, ZVA-$\ddd$ uses $v(x)=\epsilon^{\ddd(x)}$
which clearly satisfies the requirement of Theorem~\ref{th:bre}.
\end{proof}

% \section{Vanishing Relative Error} \label{sec: vre}
% 
% \input{proof_vre}

\section{Variance Reduction For Free?} \label{sec: vrff}

\rda{As part of} the \rdc{Path-ZVA algorithm} \rda{we compute} $\www(\init)$, the probability of the dominant paths from $\init$ to $\gstate$ after HPC removal. When we run the simulation, we implicitly estimate this probability again through the sampling of dominant paths, which affects the estimator variance. Hence, we will explore the possibility of achieving further variance reduction for the estimator $\ipprobest$ by using this by-product of the numerical part of the algorithm. As before, let $\tot = \tot(\init)$, and let $\dom = \dom(\init)$, $\rda{\ndom} = \tot \setminus \dom$, and $\P$ the probability measure after HPC removal. \rda{In words, $\ndom$ is the set of paths that are not dominant but which still contribute to the probability of interest.} We will discuss \rda{two} variations: one in which $\P(\dom)$ is used, and one in which we also compute $\Q(\dom)$.

In the first variation, we use the fact that we already know $\P(\dom)$ by ignoring all runs in which a dominant path is sampled. To see how this is done, note that
 \begin{equation}
\P(\tot)  = \P(\dom) + \P(\ndom)   = \P(\dom) + \mathbb{E}_{\Q}(\rda{\likrat \cdot {\bf 1}_{\ndom}}).
\label{eq vrffplus}
\end{equation}

If we only estimate the \rda{final} expectation in \eqref{eq vrffplus}, we obtain the following estimator:
\begin{equation}
\ipprobest^{\pl} \df \P(\dom) + \frac{1}{N} \sum_{i=1}^N \likrat(\pth_i) \cdot \rda{{\bf 1}_{\ndom}(\pth_i)}.  \label{eq estplus}
\end{equation}
This is equivalent to setting to zero all likelihood ratios obtained from the sampling of dominant paths, and \rdb{adding} $\P(\dom)$ \rdb{to the final result}.

In the second variation,
\rda{we also compute $\Q(\dom)$ by running the same procedure that we used for $\P(\dom)$,
but under the new measure.} 
We then use the fact that
\begin{equation}
\begin{split}
\P(\tot) & = {\P(\dom) + \mathbb{E}_{\Q}(\likrat \cdot \rda{{\bf 1}_{\ndom}})} \\ 
& = \P(\dom) + \mathbb{E}_{\Q}(L_\Q|\rda{\ndom}) \cdot \Q(\ndom).
\end{split} \label{eq vrffplusplus}
\end{equation}
\rda{Although we have not explicitly computed $\Q(\ndom)$, it holds under ZVA that $\Q(\tot) = 1$ because transitions to $\tstate$ are given probability zero. Hence, $\Q(\ndom) = 1 - \Q(\dom)$.}
In practice, we again generate samples $\pth_1,\ldots,\pth_N$,
but if $\pth_i$ is \rda{not in $\ndom$} we \emph{discard} it,
giving rise to the alternative sample $\pth'_1,\ldots,\pth_M'$
where $M$ is the number of samples that are not in~$\dom$.
The resulting estimator is given by:
\begin{equation}
\ipprobest^{\plpl}  \df \P(\dom) + \Q(\ndom)\, Y
\text{ with }
Y = \begin{cases}
  \frac{1}{M} \sum_{i=1}^M \likrat(\pth'_i) & \text{if }M>0  \\
  0                                         & \text{if }M=0.
  \end{cases}
\label{eq estplusplus}
\end{equation}
The separate treatment of $M=0$ is needed to avoid division by zero,
but does not affect the consistency of the estimator.
\rda{Note that we do not need to multiply $ \likrat(\pth'_i)$ by ${\bf 1}_{\tot}(\pth'_i)$ because $\Q(\tot) = 1$.}

Next, let us calculate the variance of $\ipprobest^{\plpl}$:
\newcommand{\Eq}{\E_\Q}
\begin{equation*}
\begin{split}
  \vrq(\ipprobest^{\plpl})
  &= \vrq \left( \P(\dom) + \Q(\ndom) Y \right)
   = \Q(\ndom)^2 \vrq Y
  \\
  &= \Q(\ndom)^2 ( \Eq \vrq(Y|M) + \vrq \Eq(Y|M) )
  \\
  &= \Q(\ndom)^2 \Eq \left. \begin{cases}
      \frac{1}{M} \vrq ( \likrat | \ndom) & \text{if }M>0  \\
      0                                    & \text{if }M=0
    \end{cases} \right\}
 +   \Q(\ndom)^2 \vrq \left. \begin{cases}
      \Eq ( \likrat | \ndom) & \text{if }M>0  \\
      0                                    & \text{if }M=0
    \end{cases} \right\}
\\
  &\approx \Q(\ndom)^2 \frac{\vrq ( \likrat | \ndom)}{N \Q(\ndom)}
  = \frac{\Q(\ndom)}{N} \vrq ( \likrat | \ndom ).
\end{split}
\end{equation*}
where the second line uses the law of total variance,
and the approximation in the fourth line is the limit for $N\rightarrow\infty$.
This limit is motivated by observing that
$M$ has a binomial distribution with parameters $N$ and $\Q(\ndom)$,
which becomes increasingly peaked around its mean $N \Q(\ndom)$ as $N\rightarrow\infty$.

Next, decompose the variance of the original importance sampling estimator $\ipprobest$:
\begin{equation*}
\begin{split}
\vrq(\ipprobest) & = \frac{1}{N} \vrq(\likrat \cdot \i_{\tot})  \\
& = \frac{1}{N} \left(\E_\Q[\vrq(\likrat \cdot \i_{\tot} | \i_{\dom})] + \vrq[\E_\Q(\likrat \cdot \i_{\tot}|\i_\dom)]\right) \\
& = \frac{1}{N} \Q(\ndom)\cdot\vrq(\likrat \cdot \i_{\tot} | \ndom) + \frac{1}{N} \Q(\dom)\cdot\vrq(\likrat \cdot \i_{\tot} | \dom)
+ \frac{1}{N} \vrq[\E_\Q(\likrat \cdot \i_{\tot}|\i_\dom)],
\\
& \approx \vrq(\ipprobest^{++}) + \frac{1}{N} \Q(\dom)\cdot\vrq(\likrat \cdot \i_{\tot} | \dom) + \frac{1}{N} \vrq[\E_\Q(\likrat \cdot \i_{\tot}|\i_\dom)].
\end{split}
\end{equation*}
The latter two terms in this equation are variances and, hence, positive, meaning that \rda{$\ipprobest$} will (for large $N$) have larger variance than $\ipprobest^{++}$.
This will be demonstrated using a case study in Section~\ref{sec:exp-varfree}.

\section{Experimental Results} \label{sec: results}

% ------------------------------------------------------------------
% ------------------------- basic, k = [4,4] -----------------------
% ------------------------------------------------------------------

\newcommand{\figureOne}{
\begin{tikzpicture}[->,>=stealth',shorten >=1pt,auto,node distance=3cm,semithick,scale=0.8]
	%white states
  \tikzstyle{every state}=[fill=white,draw=black,thick,text=black,minimum size=1cm,scale=0.5]
  \foreach \y in {0,1,2,3} {
		\foreach \x in {0,1,2,3} {
			\FPeval{\xx}{clip(\x*2.5)}%
			\FPeval{\yy}{clip(\y*1.5)}%
			\FPeval{\xy}{clip(\x+\y)}%
			\ifthenelse{\xy = 5}{
				\node[state, fill=yellow]  at (\xx,\yy) (s\x\y) {{\large $\x,\y$}};
			}{
				\ifthenelse{\xy > 5}{
					\node[state, fill=orange!80]  at (\xx,\yy) (s\x\y) {{\large $\x,\y$}};
				}{
					\node[state, fill=white]  at (\xx,\yy) (s\x\y) {{\large $\x,\y$}};
				}
			}
		}
  }
	
	% red states	
	\tikzstyle{every state}=[fill=red!20,draw=black,thick,text=black,minimum size=1cm,scale=0.5]
	
	\foreach \x in {0,1,2,3,4} {
		\FPeval{\xx}{clip(\x*2.5)}%
		\node[state]  at (\xx,6) (s\x4) {{\huge $g$}};
	}
	
	\foreach \y in {0,1,2,3} {
		\FPeval{\yy}{clip(\y*1.5)}%
		\node[state]  at (10,\yy) (s4\y) {{\huge $g$}};
	}
	
	% blue state
	\tikzstyle{every state}=[fill=blue!20,draw=black,thick,text=black,minimum size=1cm,scale=0.5]
	\node[state]  at (-0.75,-0.75) (tstate) {{\huge $t$}};
	
	% failures
	  \tikzstyle{every state}=[fill=white,draw=black,thick,text=black,scale=0.5]
  \foreach \y in {0,1,2,3} {
		\foreach \x in {0,1,2,3} {
			\FPeval{\xx}{clip(\x+1)}%
			\FPeval{\yy}{clip(\y+1)}%
			\ifthenelse{\x = 0 \AND \y = 0}{
				\path (s\x\y) edge [bend right] node[above] {\footnotesize $\frac{c}{c+1}$} (s\xx\y);
				\path (s\x\y) edge [bend left] node[right] {\footnotesize $\frac{1}{c+1}$} (s\x\yy);
			}{
				\path (s\x\y) edge [bend right] node[below=4pt, left=-10pt] {\footnotesize $\sapprox c \epsilon$} (s\xx\y);
				\path (s\x\y) edge [bend left] node[below=1pt, left=-1pt] {\footnotesize $\sapprox \epsilon$} (s\x\yy);
			}
		}
  }
	
	% interior repairs
	\foreach \y in {1,2,3} {
		\foreach \x in {1,2,3} {
			\FPeval{\xx}{clip(\x-1)}
			\FPeval{\yy}{clip(\y-1)}
			\path (s\x\y) edge [bend right] node[above=-7pt, right=-7pt] {\footnotesize $\sapprox\frac{1}{2}$} (s\xx\y);
			\path (s\x\y) edge [bend left] node[above=0pt, right=-5pt] {\footnotesize $\sapprox \frac{1}{2}$} (s\x\yy);
		}
  }
	
	% x-axis repairs
	\foreach \y in {2,3} {
		\FPeval{\yy}{clip(\y-1)}
		\path (s0\y) edge [bend left] node[above=0pt, right=-5pt] {\footnotesize $\sapprox {1}$} (s0\yy);
	}
	\path (s01) edge [bend right] node[left] {\footnotesize $\sapprox {1}$} (tstate);
	
	% y-axis repairs
	\foreach \x in {2,3} {
		\FPeval{\xx}{clip(\x-1)}
		\path (s\x0) edge [bend right] node[above=-5pt, right=-5pt] {\footnotesize $\sapprox {1}$} (s\xx0);
	}
	\path (s10) edge [bend left] node[below] {\footnotesize $\sapprox {1}$} (tstate);
	
\end{tikzpicture}
}

% ------------------------------------------------------------------
% ------------------------- basic, k = [5,2] -----------------------
% ------------------------------------------------------------------

\newcommand{\figureTwo}{
\begin{tikzpicture}[->,>=stealth',shorten >=1pt,auto,node distance=3cm,semithick,scale=0.8]
	%white states
  \tikzstyle{every state}=[fill=white,draw=black,thick,text=black,minimum size=1cm,scale=0.5]
  \foreach \y in {0,1} {
		\foreach \x in {0,1,2,3,4} {
			\FPeval{\xx}{clip(\x*2.5)}%
			\FPeval{\yy}{clip(\y*1.5)}%
			\FPeval{\xy}{clip(\x+\y)}%
			\ifthenelse{\xy = 3}{
				\node[state, fill=yellow]  at (\xx,\yy) (s\x\y) {{\large $\x,\y$}};
			}{
				\ifthenelse{\xy > 3}{
					\node[state, fill=orange!80]  at (\xx,\yy) (s\x\y) {{\large $\x,\y$}};
				}{
					\node[state, fill=white]  at (\xx,\yy) (s\x\y) {{\large $\x,\y$}};
				}
			}
		}
  }
	
	% red states	
	\tikzstyle{every state}=[fill=red!20,draw=black,thick,text=black,minimum size=1cm,scale=0.5]
	
	\foreach \x in {0,1,2,3,4,5} {
		\FPeval{\xx}{clip(\x*2.5)}%
		\node[state]  at (\xx,3) (s\x2) {{\huge $g$}};
	}
	
	\foreach \y in {0,1} {
		\FPeval{\yy}{clip(\y*1.5)}%
		\node[state]  at (12.5,\yy) (s5\y) {{\huge $g$}};
	}
	
	% blue state
	\tikzstyle{every state}=[fill=blue!20,draw=black,thick,text=black,minimum size=1cm,scale=0.5]
	\node[state]  at (-0.75,-0.75) (tstate) {{\huge $t$}};
	
	% failures
	  \tikzstyle{every state}=[fill=white,draw=black,thick,text=black,scale=0.5]
  \foreach \y in {0,1} {
		\foreach \x in {0,1,2,3,4} {
			\FPeval{\xx}{clip(\x+1)}%
			\FPeval{\yy}{clip(\y+1)}%
			\ifthenelse{\x = 0 \AND \y = 0}{
				\path (s\x\y) edge [bend right] node[above] {\footnotesize $\frac{c}{c+1}$} (s\xx\y);
				\path (s\x\y) edge [bend left] node[right] {\footnotesize $\frac{1}{c+1}$} (s\x\yy);
			}{
				\path (s\x\y) edge [bend right] node[below=4pt, left=-10pt] {\footnotesize $\sapprox c\epsilon$} (s\xx\y);
				\path (s\x\y) edge [bend left] node[below=1pt, left=-1pt] {\footnotesize $\sapprox \epsilon$} (s\x\yy);
			}
		}
  }
	
	% interior repairs
	\foreach \y in {1} {
		\foreach \x in {1,2,3,4} {
			\FPeval{\xx}{clip(\x-1)}
			\FPeval{\yy}{clip(\y-1)}
			\path (s\x\y) edge [bend right] node[above=-7pt, right=-7pt] {\footnotesize $\sapprox\frac{1}{2}$} (s\xx\y);
			\path (s\x\y) edge [bend left] node[above=0pt, right=-5pt] {\footnotesize $\sapprox \frac{1}{2}$} (s\x\yy);
		}
  }
	
	\path (s01) edge [bend right] node[left] {\footnotesize $\sapprox {1}$} (tstate);
	
	% y-axis repairs
	\foreach \x in {2,3,4} {
		\FPeval{\xx}{clip(\x-1)}
		\path (s\x0) edge [bend right] node[above=-5pt, right=-5pt] {\footnotesize $\sapprox {1}$} (s\xx0);
	}
	\path (s10) edge [bend left] node[below] {\footnotesize $\sapprox {1}$} (tstate);
	
\end{tikzpicture}
}

% ------------------------------------------------------------------
% ----------- basic, k = [5,2], deferred group repair -------------
% ------------------------------------------------------------------

\newcommand{\figureHpcDefGrp}{
\begin{tikzpicture}[->,>=stealth',shorten >=1pt,auto,node distance=3cm,semithick,scale=0.8]
	%white states
  \tikzstyle{every state}=[fill=white,draw=black,thick,text=black,minimum size=1cm,scale=0.5]
  \foreach \y in {0,1} {
		\foreach \x in {0,1,2,3,4} {
			\FPeval{\xx}{clip(\x*2.5)}%
			\FPeval{\yy}{clip(\y*1.5)}%
			\FPeval{\xy}{clip(\x+\y)}%
			\ifthenelse{\xy = 4}{
				\node[state, fill=yellow]  at (\xx,\yy) (s\x\y) {{\large $\x,\y$}};
			}{
				\ifthenelse{\xy > 4}{
					\node[state, fill=orange!80]  at (\xx,\yy) (s\x\y) {{\large $\x,\y$}};
				}{
					\node[state, fill=white]  at (\xx,\yy) (s\x\y) {{\large $\x,\y$}};
				}
			}
		}
  }
	
	% red states	
	\tikzstyle{every state}=[fill=red!20,draw=black,thick,text=black,minimum size=1cm,scale=0.5]
	
	\foreach \x in {0,1,2,3,4,5} {
		\FPeval{\xx}{clip(\x*2.5)}%
				%\ifthenelse{\x < 2}{
			\node[state]  at (\xx,4.5) (s\x2) {{\large $\x, 2$}};
		%}
		%{
			%\node[state]  at (\xx,4.5) (s\x2) {{\huge $g$}};
		%}
	}
	
	\foreach \y in {0, 1} {
		\FPeval{\yy}{clip(\y*1.5)}%
		\node[state]  at (12.5,\yy) (s5\y) {{\large $5, \y$}};
	}
	
	% blue state
	\tikzstyle{every state}=[fill=blue!20,draw=black,thick,text=black,minimum size=1cm,scale=0.5]
	\node[state]  at (-0.75,-0.75) (tstate) {{\huge $t$}};
	
	% failures
	  \tikzstyle{every state}=[fill=white,draw=black,thick,text=black,scale=0.5]
  \foreach \y in {0,1} {
		\foreach \x in {0,1,2,3,4} {
			\FPeval{\xx}{clip(\x+1)}%
			\FPeval{\yy}{clip(\y+1)}%
			\ifthenelse{\x = 0 \AND \y = 0}{
				\path (s\x\y) edge [] node[below=6.5pt, left=-8pt] {\footnotesize $\frac{c}{c+1}$} (s\xx\y);
				\path (s\x\y) edge [] node[right] {\footnotesize $\frac{1}{c+1}$} (s\x\yy);
			}{
				\ifthenelse{\x = 1 \AND \y = 0}{
					\path (s\x\y) edge [] node[below=6.5pt, left=-8pt] {\footnotesize $\frac{c}{c+1}$} (s\xx\y);
					\path (s\x\y) edge [bend left] node[left] {\footnotesize $\frac{1}{c+1}$} (s\x\yy);
				}{
					\ifthenelse{\y = 0}{
						\path (s\x\y) edge [] node[below=4pt, left=-10pt] {\footnotesize $\sapprox c\epsilon$} (s\xx\y);
						\path (s\x\y) edge [bend left] node[below=1pt, left=-1pt] {\footnotesize $\sapprox \epsilon$} (s\x\yy);
					} {
						\path (s\x\y) edge [] node[below=4pt, left=-10pt] {\footnotesize $\sapprox c\epsilon$} (s\xx\y);
						\path (s\x\y) edge [bend left] node[below=-15pt, left=-1pt] {\footnotesize $\sapprox \epsilon$} (s\x\yy);
					}
				}
			}
		}
  }
	
	% interior repairs
		\foreach \x in {2,3,4} {
			\path (s\x1) edge [bend right] node[above=2pt, right=22pt] {\footnotesize $\sapprox\frac{1}{2}$} (s01);
			\path (s\x1) edge [bend left] node[above=0pt, right=-5pt] {\footnotesize $\sapprox \frac{1}{2}$} (s\x0);
		}
		\path (s11) edge [bend left] node[above=0pt, right=-5pt] {\footnotesize $\sapprox 1$} (s10);
	
	\path (s01) edge [bend right] node[left] {\footnotesize $\sapprox {1}$} (tstate);
	
	% y-axis repairs
	\foreach \x in {2,3,4} {
		\path (s\x0) edge [bend left] node[above=-5pt, right=-5pt] {\footnotesize $\sapprox {1}$} (tstate);
	}
	
\end{tikzpicture}
}

% ------------------------------------------------------------------
% ------------------------- HPCs: one component --------------------
% ------------------------------------------------------------------

\newcommand{\figBasicHpc}{
\begin{tikzpicture}[->,>=stealth',shorten >=1pt,auto,node distance=3cm,semithick,scale=0.8] 
		\tikzstyle{every state}=[fill=blue!20,draw=black,thick,text=black,scale=0.5]
		\node at (0,0) (s0) {};
		\node[state] at (0,-0.6) (s02) {\huge $\tstate$};
		
		\tikzstyle{every state}=[fill=white,draw=black,thick,text=black,scale=0.5]
		\node[state] at (0,0.6) (s01) {\huge $\init$};
		\node[state] (s1) [right of = s0] {\huge $1$};
		\node[state] (s2) [right of = s1] {\huge $2$};
		\node[state] (s3) [right of = s2] {\huge $3$};
		\node[state] (s4) [right of = s3] {\huge $4$};
		%\node[state] (s4) [right of = s3] {$s_4$};
		
		\tikzstyle{every state}=[fill=red!20,draw=black,thick,text=black,scale=0.5]
		\node[state] (sr) [right of = s4] {\huge $\gstate$};

			\path (s01) edge [bend left] node[above] {\footnotesize $1$} (s1);
			\path (s1) edge  [bend left] node[below] {\footnotesize \textcolor{black}{$\epsilon$}} (s02);
			\path (s1) edge  [bend left] node[above] {\footnotesize \textcolor{black}{$1-\epsilon$}} (s2);
			\path (s2) edge  [bend left] node[below] {\footnotesize $1-\epsilon$} (s1);
			\path (s2) edge  [bend left] node[above] {\footnotesize $\epsilon$} (s3);
			\path (s3) edge  [bend left] node[below] {\footnotesize $1-\epsilon$} (s2);
			\path (s3) edge  [bend left] node[above] {\footnotesize $\epsilon$} (s4);
			\path (s4) edge  [bend left] node[below] {\footnotesize $1-\epsilon$} (s3);
			\path (s4) edge  [] node[above] {\footnotesize $\epsilon$} (sr);
			%\path (sr) edge  [bend left] node[below] { $1$} (s3);
			
			%\draw[red,-] (3.5,0.7) -- (5,0.7) -- (5,3.7) -- (3.5,3.7) -- (3.5,0.7);
	\end{tikzpicture}
}
% ------------------------------------------------------------------
% --------------- HPCs: one component, HPC removed -----------------
% ------------------------------------------------------------------

\newcommand{\figBasicHpcRmvd}{
\begin{tikzpicture}[->,>=stealth',shorten >=1pt,auto,node distance=3cm,semithick, scale=0.8]					
		\tikzstyle{every state}=[fill=blue!20,draw=black,thick,text=black,scale=0.5]
		\node at (0,0) (s0) {};
		\node[state] at (0,-0.6) (s02) {\huge $\tstate$};
		
		\tikzstyle{every state}=[fill=white,draw=black,thick,text=black,scale=0.5]
		\node[state] at (0,0.6) (s01) {\huge $\init$};
		\node[state] (s1) [right of = s0] {\huge $1$};
		\node[state] (s2) [right of = s1] {\huge $2$};
		\node[state] (s3) [right of = s2] {\huge $3$};
		\node[state] (s4) [right of = s3] {\huge $4$};
		
		\tikzstyle{every state}=[fill=red!20,draw=black,thick,text=black,scale=0.5]
		\node[state] (sr) [right of = s4] {\huge $\gstate$};

			\path (s01) edge [bend left] node[above] {\footnotesize $1$} (s1);
			\path (s1) edge  [bend left] node[above] {\footnotesize \textcolor{black}{$\frac{1}{2-\epsilon}$}} (s02);
			\path (s1) edge  [bend left] node[above] {\footnotesize \textcolor{black}{$\frac{1-\epsilon}{2-\epsilon}$}} (s3);
			\path (s2) edge  [bend left] node[below] {\footnotesize \textcolor{black}{$\frac{1-\epsilon}{2-\epsilon}$}} (s02);
			\path (s2) edge  [bend left] node[below] {\footnotesize \textcolor{black}{$\frac{1}{2-\epsilon}$}} (s3);
			\path (s3) edge  [bend left] node[below] {\footnotesize $1-\epsilon$} (s2);
			\path (s3) edge  [bend left] node[above] {\footnotesize $\epsilon$} (s4);
			\path (s4) edge  [bend left] node[below] {\footnotesize $1-\epsilon$} (s3);
			\path (s4) edge  [] node[above] {\footnotesize $\epsilon$} (sr);
	\end{tikzpicture}
}

In this section, we present the results of simulation experiments with the Path-ZVA method.
The aim of the experiments is twofold. In Sections~\ref{sec: toy examples}, we focus on illustrative examples meant to demonstrate theoretical results and elucidate core concepts,
namely the BRE and VRE properties (Sec.\ \ref{sec:exp-bre-vre}),
the nature of $\core$ and $\edge$ in a practical example (Sec.\ \ref{sec:exp-core-edge}),
HPC removal (Sec.\ \ref{sec:exp-hpc}),
and the performance of $\ipprobest^{\pl}$ and $\ipprobest^{\plpl}$ (Sec.\ \ref{sec:exp-varfree}).
In Section~\ref{sec: realistic examples} we demonstrate the good performance of the new method using several realistic models from the literature. We compare it to the BFB and IGBS methods discussed in Section~\ref{sec: related work}, and to the results for two case studies presented by \citet{carrasco2006failure}.
All of the experiments were conducted using a general framework written in Java, and the code needed to run the experiments is available on \website. All experiments involve a particular class of models, namely highly reliable \emph{multicomponent systems}. Although this is already a very broad class of models, we emphasise that our procedure works for any HRMS (see \rdc{the sample models included with the algorithm's code} for several other applications, such as a 2-node tandem queue).

The simulation methods that we consider are: standard Monte Carlo (MC), BFB and IGBS from the literature,
and two variations of our Zero Variance Approximation method,
namely ZVA-$\ddd$ and ZVA-$\dom$
as defined in Section~\ref{sec:pathzva}.

\newcommand{\sapprox}{\approx\hspace{-2.5pt}}
\vspace{-0.3cm}
\begin{figure}[!ht]
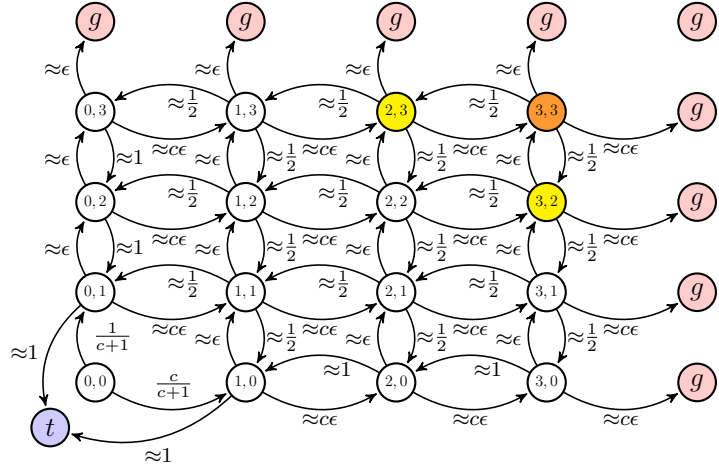

\caption{Example of the case study of Section~\ref{sec: basic mcs}, with $k_1 = k_2 = 4$.
The blue state is~$t$, the pink states (marked~$g$) are to be merged into a single state~$g$,
the yellow states (2,3) and (3,2) form $\edge$,
the states in $\core \setminus \{\gstate, \tstate\}$ are white,
and ${\cal X} \setminus ( \Lambda \cup \Gamma)$ in this case contains only state (3,3), coloured orange. 
}

\centering
\figureOne
\label{fig: first case study}
\end{figure}

\subsection{Illustrative Examples} \label{sec: toy examples}

\subsubsection{A Basic Example} \label{sec: basic mcs}
\label{sec:exp-bre-vre}
\label{sec:exp-core-edge}

Our first example is a multicomponent system with two component types, and $k_1$ and $k_2$ components of types~1~and~2 respectively. \rdd{The system states are denoted by $(x_1,x_2)$, in which $x_i, i \in\{1, 2\},$ is the number of components of type $i$ that have failed.} For each component type, one component is active at each time, with the other components acting as spares. The rate at which the active component of type 1 fails equals $c\epsilon$, $c \in (0, \infty)$, while the active component of type 2 fails with rate $\epsilon$. Each component type has a dedicated repair unit which begins work immediately after the first component has failed, and which repairs a single component with a rate of $1$. The system as a whole fails if all components of at least one of the two types have failed. 
\rdd{Both the initial state $s$ and regeneration state $t$ are (0,0);
	as usual, we are interested in the probability
	of reaching a failure state $g$ before returning to (0,0).}

\rda{A DTMC is created for this model (and all other models in this section) by assigning to transitions from $\statea$ to $\stateb$, with $\statea, \stateb \in \ssX$, a probability equal to the rate of transitions from $\statea$ to $\stateb$ divided by the total exit rate of state $\statea$.} A graphical representation of such a DTMC is given in Figure~\ref{fig: first case study} for $k_1=k_2=4$. The model has no HPCs, and, depending on $k_1$ and $k_2$, the dominant paths are given by the two straight paths from $(0,0)$ to $(k_1,0)$ and $(0,k_2)$. If $k_1 = k_2$, both paths are dominant, otherwise the shortest path is the unique dominant path. It holds that $d(\init, \gstate) = \min(k_1,k_2)$, and a state $(x_1,x_2)$ is in~$\core$ iff $x_1 + x_2 \leq \min(k_1,k_2)$.

\setlength{\tabcolsep}{0.75pt}
\begin{table}[t]
 \centering
\tbl{Confidence intervals (95\%) for $\ipf$ as functions of $\epsilon$ for the different simulation methods, for the model of Figure~\ref{fig: first case study}. Sample size: 10\,000 runs.}{
\begin{tabular}{|ccc|clcccclcccclcccclccc|}
	\hline
	\rule{0pt}{1.1em}
	\phantom{-} & $\epsilon$ & \phantom{-} & \phantom{-} & \multicolumn{3}{c}{MC} & \phantom{-} & \phantom{-} & \multicolumn{3}{c}{BFB} & \phantom{-} & \phantom{-} & \multicolumn{3}{c}{ZVA-$\ddd$} & \phantom{-} & \phantom{-} & \multicolumn{3}{c}{ZVA-$\dom$} & \phantom{-} \\ \hline
	 & 0.1 &  & & $1.038$\ex{-3} & $\pm$ & 6.08\% & & & $9.955$\ex{-4} & $\pm$ & 1.00\% & & & $9.994$\ex{-4} & $\pm$ & 0.12\% & & & $9.999$\ex{-4} & $\pm$ & 0.10\% &\\ 
	 & 0.01 &  & &  \multicolumn{3}{c}{---}  & & & $1.010$\ex{-6} & $\pm$ & 1.46\% & & & $1.000$\ex{-6} & $\pm$ & 0.04\% & & & $1.000$\ex{-6} & $\pm$ & 0.03\% &\\ 
	 & 0.001 &  & &  \multicolumn{3}{c}{---}  & & & $9.940$\ex{-10} & $\pm$ & 1.55\% & & & $1.000$\ex{-9} & $\pm$ & 0.01\% & & & $1.000$\ex{-9} & $\pm$ & 0.01\% &\\ 
	 & 1.0E-4 &  & &  \multicolumn{3}{c}{---}  & & & $1.014$\ex{-12} & $\pm$ & 1.54\% & & & $1.000$\ex{-12} & $\pm$ & 0.00\% & & & $1.000$\ex{-12} & $\pm$ & 0.00\% &\\ 
	\hline
\end{tabular}}
\label{tab: method relative errors}
\end{table}
\setlength{\tabcolsep}{6pt}

In Table~\ref{tab: method relative errors}, we present a summary of a basic simulation experiment with different values of $\epsilon$ for each of the main simulation methods discussed in this paper, performed on the model with $k_1=k_2=4$. It can be seen that ZVA does much better than the other methods for sufficiently small values of $\epsilon$.
We expect VRE for ZVA-$\dom$ and BRE for ZVA-$\ddd$ by Corollary~\ref{th:zva},
which is indeed confirmed by the table.

\begin{figure}
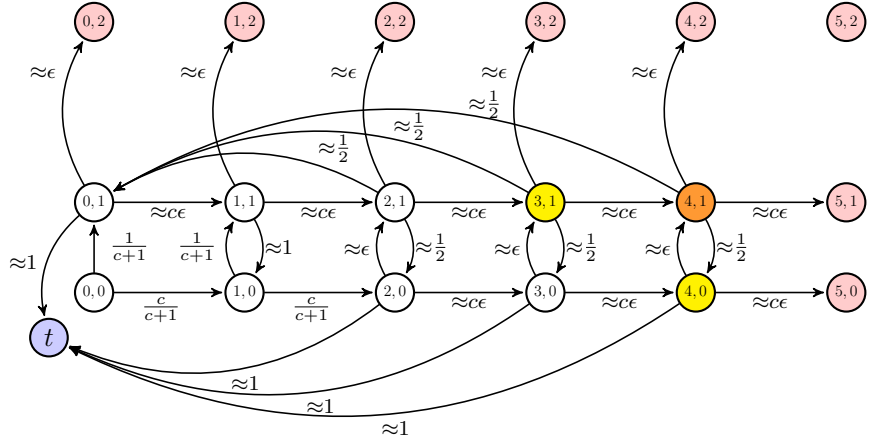

\caption{Same system as in Figure~\ref{fig: first case study}, except with $k_1=5$ and $k_2=2$. Also, component type~1 is subject to deferred group repair, meaning that repair starts when 2 components have failed, and all components are repaired at the same time. \rda{The colouring is the same as in Figure~\ref{fig: first case study}. }}
\centering
\figureHpcDefGrp
\label{fig: basic hpc example}
\end{figure}

\subsubsection{Group/Deferred Repair}
\label{sec:exp-hpc}

We now discuss the impact of HPCs on the performance of the various \rda{importance sampling} methods. HPCs can emerge naturally in a multicomponent system if repair strategies are used that cause repairs to be slow or inactive in certain states of the system. It is known that BFB does not do well when HPCs are present; to remedy this, a more intricate version of BFB has been proposed, called IGBS \cite{juneja2001fast}; see Section~\ref{sec: related work} for more details. 
In this section, we will see that BFB will not do well in this setting, and IGBS only in some cases depending on the choice of parameters.

\setlength{\tabcolsep}{0.75pt}
\begin{table}[!t]
 \centering
\tbl{Confidence intervals (95\%) for $\ipf$ as a function of $\epsilon$ for the model of Figure~\ref{fig: basic hpc example}, with $c = \frac{1}{50}$. Here, $\pi \approx 100 / 51 \cdot \epsilon \approx 1.96 \epsilon$. Sample size: 10\,000 runs.
A `---' means that the rare event was not observed at all.
}{
\begin{tabular}{|ccc|clcccclcccclcccclcccclccc|}
	\hline
	\rule{0pt}{1.1em}
	\phantom{-} & $\epsilon$ & \phantom{-} & \phantom{-} & \multicolumn{3}{c}{MC} & \phantom{-} & \phantom{-} & \multicolumn{3}{c}{BFB} & \phantom{-} & \phantom{-} & \multicolumn{3}{c}{IGBS} & \phantom{-} & \phantom{-} & \multicolumn{3}{c}{ZVA-$\ddd$} & \phantom{-} & \phantom{-} & \multicolumn{3}{c}{ZVA-$\dom$} & \phantom{-} \\ \hline
	 & 0.1 &  & & $1.133$\ex{-1} & $\pm$ & 5.48\% & & & $9.865$\ex{-2} & $\pm$ & 6.63\% & & & $8.933$\ex{-2} & $\pm$ & 26.5\% & & & $1.065$\ex{-1} & $\pm$ & 1.92\% & & & $1.063$\ex{-1} & $\pm$ & 0.01\% &\\ 
	 & 0.001 &  & & $1.300$\ex{-3} & $\pm$ & 54.3\% & & & $1.166$\ex{-3} & $\pm$ & 19.9\% & & & $2.241$\ex{-3} & $\pm$ & 20.5\% & & & $1.892$\ex{-3} & $\pm$ & 6.61\% & & & $1.912$\ex{-3} & $\pm$ & 0.00\% &\\ 
	 & 1.0E-5 &  & &  \multicolumn{3}{c}{---}  & & & $1.180$\ex{-5} & $\pm$ & 19.7\% & & & $1.718$\ex{-5} & $\pm$ & 23.9\% & & & $2.038$\ex{-5} & $\pm$ & 6.78\% & & & $1.960$\ex{-5} & $\pm$ & 0.00\% &\\ 
	 & 1.0E-7 &  & &  \multicolumn{3}{c}{---}  & & & $1.140$\ex{-7} & $\pm$ & 6.96\% & & & $1.921$\ex{-7} & $\pm$ & 23.5\% & & & $2.019$\ex{-7} & $\pm$ & 6.79\% & & & $1.961$\ex{-7} & $\pm$ & 0.00\% &\\ 
	\hline
\end{tabular}}
\label{tab: basic hpc}
\end{table}
\setlength{\tabcolsep}{6pt}

The setting that we consider first is depicted as a DTMC in Figure~\ref{fig: basic hpc example}. Here, $k_1 = 5$ and $k_2 = 2$, and the repair strategy for component type 1 includes both deferred and group repair. Deferred repair means that the repair unit for component type 1 will not begin work until a minimum number of components have broken down --- two in this case. Group repair means that when repair has begun, all components are repaired at the same time. The DTMC contains an HPC between states $(1,0)$ and $(1,1)$. This has a large impact on the dominant paths. Specifically, one dominant path is the path $((0,0), (0,1), (0,2))$, which occurs with probability $\frac{1}{c+1} \epsilon$. The other dominant paths are those that jump from $(0,0)$ to $(1,0)$, then cycle between $(1,0)$ and $(1,1)$ $k$ times, $k \in \{0,1,\ldots\}$, and then jump to $(1,2)$. These paths have a total probability contribution of
\[
  \frac{c}{(c+1)^{\rda{2}}} \cdot \sum_{i=0}^{\infty} \left(\frac{1}{c+1} \cdot \frac{1}{1+(c+1)\epsilon} \right)^i \cdot \epsilon \; = \; \rda{\frac{1}{c+1}}\epsilon + o(\epsilon).
\]
so for small $\epsilon$ roughly one half of the total probability mass is contributed by the path going to $(0,2)$ and the other half by the ones going to $(1,2)$. 

During the pre-processing step, the HPC is detected by Algorithm~\ref{alg: forward phase} when the transition from state $(1,1)$ to state $(1,0)$ is considered. At that point, state $(1,0)$ has already been determined to be in $\core$, whilst the `cost' of reaching these states in terms of $\epsilon$-orders is the same. Hence, the condition in line~\ref{ln: loop detect if} is satisfied, which triggers the HPC removal procedure of Algorithm~\ref{alg: loop detect}. Note that for the model of Figure~\ref{fig: first case study}, a HPC is (correctly) not detected because the `cost' to reach states $(1,0)$ and $(1,1)$ is different. The set of states in the HPC in Figure~\ref{fig: basic hpc example} (i.e., the set $L$ of line~\ref{ln: def L}) equals $\{(1,0), (1,1)\}$. This means that all transitions within $L$ are removed and the probabilities of ending up in states $(2,0)$, $(2,1)$, and $(1.2)$ from the two states in $L$ are determined via line~\ref{ln: scc merge}. For example, for $\epsilon = \frac{1}{100}$, this leads to probabilities of roughly $66\%$, $0.6\%$, and $33\%$ of reaching states $(2,0)$, $(2,1)$ and $(1,2)$ respectively from state $(1,0)$.

BFB will not do well for small values of $c$; a cycle occurs with a probability of roughly $1/(c+1)$ under $\P$, which is close to one if $c$ is close to zero, but BFB will only assign probability $\frac14$ to these cycles. This means that the dominant paths that contain many cycles will be sampled infrequently, resulting either in underestimation (see \citet{devetsikiotis1993algorithmic}) when these paths are not sampled in a simulation experiment, or high relative errors if they are sampled as each cycle blows up the likelihood ratio roughly by a factor $4/(c+1) \approx 4$. IGBS mitigates the impact of this phenomenon by setting the probability of each HPC to $\delta^2$ instead of $\frac14$, with $\delta^2 < \frac14$. Still, `good' choices of $\delta$ depend on $c$, so this requires a non-trivial knowledge of the system. This is illustrated in Table~\ref{tab: basic hpc}, in which BFB can be seen to suffer from underestimation (as witnessed by, e.g., its confidence interval not containing the true value of approximately $1.960\cdot 10^{-5}$ for $\epsilon = 10^{-5}$). \rdd{Note that the confidence interval bounds in the first columns do not seem
	trustworthy, probably because we have too few samples and/or very large fourth
	moments.} By contrast IGBS (with $\delta = \frac{1}{100}$) is accurate in the sense that its confidence interval contains the true value, although it does not perform as well as ZVA.

\setlength{\tabcolsep}{0.75pt}
%\vspace{-0.3cm}
\begin{table}[t]
 \centering
\tbl{Confidence intervals (95\%) for $\ipf$ as a function of $\epsilon$ for the model of Figure~\ref{fig: basic hpc example} with two changes: $(\rdd{k_1, k_2}) = (5, 3)$, and for components of type 1, the first fails with rate $\epsilon$ and the others with rate $\epsilon^2$.
Sample size: 10\,000 runs.
Note that ZVA-$\rda{\ddd}$ has VRE because of the specific simple structure of $\www$ in this model.
A confidence interval width of `---' means that no variance was observed.
}{
\begin{tabular}{|ccc|clcccclcccclcccclccc|}
	\hline
	\rule{0pt}{1.1em}
	\phantom{-} & $\epsilon$ & \phantom{-} & \phantom{-} & \multicolumn{3}{c}{BFB} & \phantom{-} & \phantom{-} & \multicolumn{3}{c}{IGBS} & \phantom{-} & \phantom{-} & \multicolumn{3}{c}{ZVA-$\ddd$} & \phantom{-} & \phantom{-} & \multicolumn{3}{c}{ZVA-$\dom$} & \phantom{-} \\ \hline
	 & 0.1 &  & & $2.692$\ex{-2} & $\pm$ & 46.0\% & & & $9.391$\ex{-2} & $\pm$ & 128\% & & & $4.651$\ex{-2} & $\pm$ & 1.30\% & & & $4.644$\ex{-2} & $\pm$ & 1.02\% &\\ 
	 & 0.01 &  & & $3.440$\ex{-4} & $\pm$ & 47.0\% & & & $6.125$\ex{-3} & $\pm$ & 62.4\% & & & $4.939$\ex{-3} & $\pm$ & 0.44\% & & & $4.961$\ex{-3} & $\pm$ & 0.33\% &\\ 
	 & 0.001 &  & & $2.933$\ex{-6} & $\pm$ & 57.1\% & & & $1.348$\ex{-4} & $\pm$ & 29.2\% & & & $4.995$\ex{-4} & $\pm$ & 0.14\% & & & $4.987$\ex{-4} & $\pm$ & 0.13\% &\\ 
	 & 1.0E-4 &  & & $6.664$\ex{-8} & $\pm$ & 99.9\% & & & $1.797$\ex{-6} & $\pm$ & 41.2\% & & & $4.998$\ex{-5} & $\pm$ & 0.05\% & & & $5.000$\ex{-5} & $\pm$ & 0.03\% &\\ 
	 & 1.0E-5 &  & & $4.093$\ex{-10} & $\pm$ & 57.9\% & & & $3.654$\ex{-8} & $\pm$ & 79.9\% & & & $4.999$\ex{-6} & $\pm$ & 0.02\% & & & $5.000$\ex{-6} & $\pm$ & --- &\\ 
	\hline
\end{tabular}}
\label{tab: extreme hpc}
\end{table}
\setlength{\tabcolsep}{6pt}

As the value of $c$, and therefore the probability of leaving the HPC, is decreased, the performance of IGBS will worsen. In the extreme case where the probability of leaving the HPC decreases proportionally with $\epsilon$, this is particularly visible. Consider the following modifications to the previous example: $k_1$ now equals $3$, and the failure rate for components of type 1 is $\epsilon$ for the first component and $\epsilon^2$ for the spare components. In Table~\ref{tab: extreme hpc}, we have displayed the results for this setting. Here, IGBS does not contain the true value of approximately $\frac12\epsilon$ for smaller values of $\epsilon$. We have not included standard MC because of the very large amount of time it takes to sample runs.

\subsubsection{Variance reduction for free} 
\label{sec:exp-varfree}

Table~\ref{tab: method relative errors plus} shows a comparison of the different estimators discussed in Section~\ref{sec: vrff}, using the model of Figure~\ref{fig: basic hpc example}.
We see that $\ipprobest^{\plpl}$ has notably better performance than the standard estimator, whereas $\ipprobest^{\pl}$ is worse.
The difference between $\ipprobest^{\plpl}$ and $\ipprobest$ varies between models --- e.g., for the model of Figure~\ref{fig: first case study} their performance is roughly the same, and in some models we have even observed $\ipprobest^{\plpl}$ performing worse than $\ipprobest^{\pl}$.
However, as is evident from Table~\ref{tab: method relative errors plus}, the potentially minor cost of performing the numerical pre-processing step a second time can lead to a reduction in confidence interval width of over 75\% (e.g., see the row for $\epsilon = 0.01$).
We will consider the pre-processing runtimes in more detail in the next section. Note that when no non-dominant paths are drawn (i.e., $M=0$), $\ipprobest^{\plpl}$ is no longer able to produce an estimate of the estimator variance and $\ipprobest$ is to be preferred.
\setlength{\tabcolsep}{0.75pt}
\begin{table}[!ht]
 \centering
\tbl{Confidence intervals (95\%) for $\ipf$ as functions of $\epsilon$ for the different simulation methods \rdd{discussed in Section~\ref{sec: vrff}}, for the model of Figure~\ref{fig: basic hpc example}.}{
\begin{tabular}{|ccc|clcccclcccclccc|ccccccccc|}
	\hline
	\rule{0pt}{1.1em}
	\phantom{-} & $\epsilon$ & \phantom{-} & \phantom{-} & \multicolumn{3}{c}{\rdd{$\ipprobest$}} & \phantom{-} & \phantom{-} & \multicolumn{3}{c}{\rdd{$\ipprobest^{\pl}$}} & \phantom{-} & \phantom{-} & \multicolumn{3}{c}{\rdd{$\ipprobest^{\pl\pl}$}} & \phantom{-} & \phantom{-} & $N$ & \phantom{-} & \phantom{-} & $M$ & \phantom{-} & \phantom{-} & $N \Q(\dom)$ & \phantom{-}  \\ \hline
	 & 0.1 &  & & $1.063$\ex{-1} & $\pm$ & 0.0092\% & &  & $1.063$\ex{-1} & $\pm$ & 0.2327\% & &  & $1.063$\ex{-1} & $\pm$ & 0.0018\% &  & & $10\,000$ & & & $141$ & & & $143$ & \\ 
	 & 0.01 &  & & $1.622$\ex{-2} & $\pm$ & 0.0060\% & &  & $1.621$\ex{-2} & $\pm$ & 0.1161\% & &  & $1.622$\ex{-2} & $\pm$ & 0.0001\% &  & & $10\,000$ & & & $35$ & & & $42$ & \\ 
	 & 0.001 &  & & $1.912$\ex{-3} & $\pm$ & 0.0009\% & &  & $1.912$\ex{-3} & $\pm$ & 0.0392\% & &  & $1.912$\ex{-3} & $\pm$ & --- &  & & $10\,000$ & & & $4$ & & & $5$ & \\ 
	 & 1.0E-4 &  & & $1.956$\ex{-4} & $\pm$ & 0.0001\% & &  & $1.956$\ex{-4} & $\pm$ & --- & &  & $1.9556$\ex{-4} & $\pm$ & ---  &  & & $10\,000$ & & & $0$ & & & $1$ & \\ 
	\hline
\end{tabular}}
\label{tab: method relative errors plus}
\end{table}
\setlength{\tabcolsep}{6pt}

\subsection{Realistic Examples} \label{sec: realistic examples}

In this section we demonstrate the good performance of the Path-ZVA approach using two models from the literature. \rdd{The first is} the Distributed Database System, a classic literature benchmark that has been studied since the seventies \cite{rosenkrantz1978system}, but which remains relevant today. 
In Section~\ref{sec: dds}, we study the variation from \citet{boudali2008arcade} and \citet{reijsbergen2010rare}, and use Path-ZVA to compare the performance of different repair strategies. In Section~\ref{sec: fcs}, we study the variation from \citet{carrasco2006failure}, and the fault-tolerant computing system \rdd{of the same paper}.

Instead of $\ipf$, the probability of reaching the goal set during a \rdd{regeneration} cycle (i.e., before returning to the taboo state), the probability of interest in \cite{carrasco2006failure} is the system \emph{unavailability}, denoted here by $\unaiv$. It is defined as the steady-state probability of being in the goal set. \rdd{W}e will also consider this measure in this section \rdd{in order to compare results}.
We use $\unaiv=\E(Z)/\E(D)$,
where $D$ is the total duration of a \rdd{regeneration} cycle (i.e., time between two visits to the taboo state) and $Z$ is the amount of time spent in the goal set during a \rdd{regeneration} cycle. 
Typically we estimate $\E(D)$ using standard MC,
while $\E(Z)$ is estimated based on an estimate of $\ipf$.
For a more elaborate discussion, see, e.g., \citet{reijsbergen2010rare}. Note that HPC removal does have non-trivial consequences for state sojourn times and hence estimates for $\unaiv$, although this does not affect the case studies because they do not have HPCs.

Additionally, each table now also displays the run times and \rdd{Work-Normalized Variance Ratios (WNVRs)} with respect to standard MC. We use the following \rdd{WNVR} definition: For a method $m$, let $w_m$ be its confidence interval half-width and $\rho_m$ the total time needed to produce the result. Then the \rdd{WNVR} for this method is given by $(w_{\text{MC}} / w_{m})^2 \cdot \rho_{\text{MC}} / \rho_{m}$. The \rdd{WNVR} represents the fact that to reduce the confidence interval width by a factor $c$ one would need to draw $c^2$ as many samples. It allows for easy comparison between methods with different runtimes; higher values of the \rda{WNVR} indicate better performance. 

\subsubsection{The Distributed Database System} \label{sec: dds}

In this variant, the system consists of 9 component types: one set of 2 processors, two sets of 2 controllers each, and 6 disk clusters, with 6 disks each; see 
Figure~\ref{fig: dds}. The failure rates for individual components are $\epsilon^2 / 2$ for processors and disk controllers, and $\epsilon^2 / 6$ for disks. The rates of component repairs are 1 for processors and disk controllers, and $\epsilon$ for disks. \rdc{Note that the $\epsilon$-orders are not part of the benchmark setting: the disk repairs being asymptotically slower than the other repairs is specific to this paper. An interpretation would be that the data on the disks needs to be replicated whereas the processors and the disk controllers only require hardware replacement.} If we had assigned the same $\epsilon$-order to the repairs of each of the types then the four repair strategies would have the exact same asymptotic performance.
The total failure rate for each component type depends linearly on the number of working components of that type; e.g., four working disks in disk set 1 means a total failure rate of $\rdd{2\epsilon^2/3}$ for disk set 1.
\rdc{The system as a whole is down if both processors are down, if both disk controllers in one of the controller sets are down, or if four disks are down in a single cluster.} \rdd{Both $\init$ and $\tstate$ are the state where all components are up.} We consider four repair strategies:
\begin{enumerate}
\item A dedicated repair unit for each of the 9 component types. \label{it: ded rep}
\item One repair unit, with priority given to high component type indices (i.e., disks first, then controllers, then processors).
\item One repair unit, with priority given to low component type indices (i.e., processors first, then controllers, then disks).
\item One repair unit, with a First Come First Served (FCFS) policy. \label{it: fcfs}
\end{enumerate}

From a modelling point of view, Strategy~\ref{it: fcfs} is the least tractable; to keep track of the order in which the components failed, a vector representing the number of failed components of each type is not sufficient. Specifically, if $k$ components are down, then there are $k!$ ways in which this could have happened chronologically. This poses two problems. First\rdd{,} the size of the state space blows up dramatically, from 421\,875 to 2\,123\,047\,371 states. Second, if a modelling language is used that does not support lists (e.g., PRISM's reactive modules language), even a high-level description of the model can be hard to give. However, in the Java framework that we use for the experiments, states that contain lists are not conceptually harder to implement than vectors. \rdc{The sizes of the sets $\core$ and $\edge$ for the four strategies are as follows: 155 and 399 for dedicated repair, 561 and 448 for disk priority, 175 and 463 for processor priority, and 578 and 4\,428 for FCFS. In all cases, $\core \cup \edge$ is much smaller than the full state space.}

\begin{figure}
	\caption{Distributed Database System.}
	\label{fig: dds}
  \begin{center}
    \includegraphics[width=0.42\textwidth]{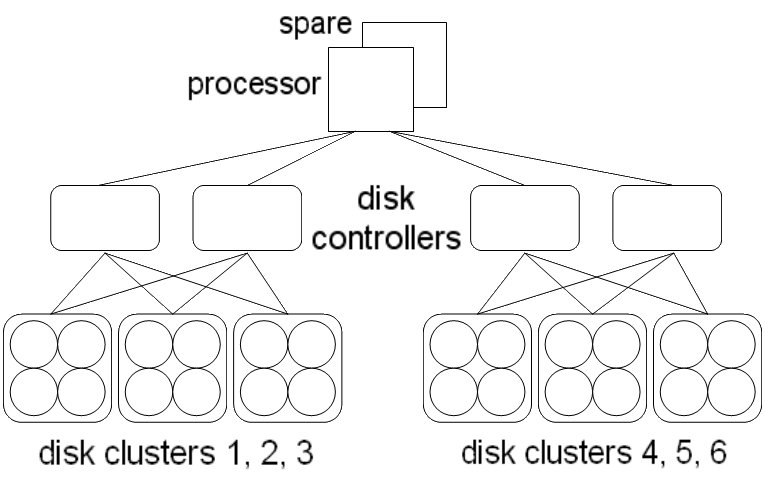}
  \end{center}
\end{figure}

In Table~\ref{tab: dds slow disks strategies}, we compare the four repair strategies in terms of their performance. Disk priority and FCFS are much more failure prone than the other strategies, because system failure due to two processors or disk controllers breaking becomes more likely if the repair unit is working on a disk. Apart from ZVA-$\dom$, we also present
results obtained using the model checking tool PRISM, which approximates the probability
of interest using numerical techniques (e.g., Gauss-Seidel) applied to the transition
probability matrix. We see that our methods are accurate, albeit less efficient than
PRISM, which was typically able to find the probability of interest within a second.

\setlength{\tabcolsep}{0.75pt}
\begin{table}[!ht]
 \centering
\tbl{Confidence intervals (95\%) as a function of $\epsilon$, generated using Path-ZVA; comparison of repair strategies for the DDS with slow disk repairs.}{
\begin{tabular}{|cccccc|clccc|cccccccccccc|}
	\hline
	\rule{0pt}{1.1em}
	\phantom{-} & \multirow{2}{*}{model} & \phantom{-} & \phantom{-} & \multirow{2}{*}{$\epsilon$} & \phantom{-} &  \phantom{-} & \multicolumn{3}{c}{\multirow{2}{*}{estimate ($\ipf$)}} & \phantom{-} & \phantom{-} & \multirow{2}{*}{$N$} & \phantom{-} & \phantom{-} & \multicolumn{4}{c}{runtime (ms)} & \phantom{-} & \phantom{-} & \multirow{2}{*}{PRISM} & \phantom{-} \phantom{-}  \\
	 & & & & & &  & & & & & & & & & num. & & & sim. & & & &  \\ \hline
	 & \multirow{4}{*}{DDS, ded. rep.} & &  & \multirow{1}{*}{0.1} & &  & $2.997$\ex{-3} & $\pm$ & 6.81\% & & & 69272 & & & 1663 & & & 10000 & & & $3.441$\ex{-3} &  \\
	 & & &  & \multirow{1}{*}{0.03} & &  & $1.802$\ex{-4} & $\pm$ & 1.71\% & & & 815322 & & & 1501 & & & 10000 & & & $1.859$\ex{-4} &  \\
	 & & &  & \multirow{1}{*}{0.01} & &  & $1.786$\ex{-5} & $\pm$ & 1.43\% & & & 1522273 & & & 1420 & & & 10000 & & & $1.790$\ex{-5} &  \\
	 & & &  & \multirow{1}{*}{0.003} & &  & $1.543$\ex{-6} & $\pm$ & 1.71\% & & & 1495413 & & & 1440 & & & 10000 & & & $1.532$\ex{-6} &  \\ \hline
	 & \multirow{4}{*}{DDS, disk prior.} & &  & \multirow{1}{*}{0.1} & &  & $4.466$\ex{-2} & $\pm$ & 16.6\% & & & 4361 & & & 1117 & & & 10003 & & & $4.925$\ex{-2} &  \\
	 & & &  & \multirow{1}{*}{0.03} & &  & $1.634$\ex{-3} & $\pm$ & 6.92\% & & & 14205 & & & 1041 & & & 10000 & & & $1.704$\ex{-3} &  \\
	 & & &  & \multirow{1}{*}{0.01} & &  & $1.356$\ex{-4} & $\pm$ & 2.54\% & & & 33536 & & & 1042 & & & 10002 & & & $1.367$\ex{-4} &  \\
	 & & &  & \multirow{1}{*}{0.003} & &  & $1.099$\ex{-5} & $\pm$ & 0.66\% & & & 103258 & & & 1056 & & & 10000 & & & $1.101$\ex{-5} &  \\ \hline
	 & \multirow{4}{*}{DDS, proc. prior.} & &  & \multirow{1}{*}{0.1} & &  & $8.642$\ex{-3} & $\pm$ & 13.4\% & & & 3292 & & & 790 & & & 10007 & & & $8.419$\ex{-3} &  \\
	 & & &  & \multirow{1}{*}{0.03} & &  & $1.944$\ex{-4} & $\pm$ & 6.18\% & & & 19236 & & & 783 & & & 10000 & & & $1.954$\ex{-4} &  \\
	 & & &  & \multirow{1}{*}{0.01} & &  & $1.797$\ex{-5} & $\pm$ & 2.51\% & & & 54301 & & & 787 & & & 10001 & & & $1.798$\ex{-5} &  \\
	 & & &  & \multirow{1}{*}{0.003} & &  & $1.520$\ex{-6} & $\pm$ & 0.87\% & & & 161383 & & & 789 & & & 10000 & & & $1.533$\ex{-6} &  \\ \hline
	 & \multirow{4}{*}{DDS, FCFS} & &  & \multirow{1}{*}{0.1} & &  & $3.058$\ex{-2} & $\pm$ & 22.1\% & & & 1278 & & & 38116 & & & 10007 & & & --- &  \\
	 & & &  & \multirow{1}{*}{0.03} & &  & $1.384$\ex{-3} & $\pm$ & 16.9\% & & & 3512 & & & 40400 & & & 10004 & & & --- &  \\
	 & & &  & \multirow{1}{*}{0.01} & &  & $1.304$\ex{-4} & $\pm$ & 6.14\% & & & 5068 & & & 41954 & & & 10002 & & & --- &  \\
	 & & &  & \multirow{1}{*}{0.003} & &  & $1.066$\ex{-5} & $\pm$ & 2.50\% & & & 7483 & & & 38669 & & & 10000 & & & --- &  \\
	\hline
\end{tabular}}
\label{tab: dds slow disks strategies}
\end{table}
\setlength{\tabcolsep}{6pt}

\subsubsection{Fault-Tolerant Control/Database Systems} \label{sec: fcs}

Two models are presented by \citet[Section~VI]{carrasco2006failure}: the Fault-Tolerant Database System (FTD) and the Fault-Tolerant Control System (FTC). The FTD is a variation of the Distributed Database System \rdd{discussed in the previous section} --- \rdd{the goal and taboo sets are the same}. The FTD has 10 component types; however, there are two types of failures \rdd{so} the state is represented using a 20-dimensional vector. Additionally, the model has failure propagation: a failure of a processor of the first type may trigger a failure of a processor of the second type. There are two parameter settings (I and II). In setting I the system is `balanced' in the sense that the $\epsilon$-orders of all failure transitions equals 1, whereas in setting II some failures have $\epsilon$-order 1 and others $\epsilon$-order 2. The second model, the FTC, consists of 39 component types, and system failure is a non-trivial function of the state. \rdd{Because of space constraints, we refer the reader to \citet{carrasco1992failure} or our programming code for a full description of the model.} We only consider the first out of four possible parameter settings for the FTC. The technique proposed in the paper, called Balanced Failure Transition Distance Biasing (BFTDB), is a refinement of the method proposed by \citet{carrasco1992failure} to ensure good performance for unbalanced systems.

As we can see from Table~\ref{tab: carrasco comparison}, Path-ZVA has roughly similar performance to BFTDB, which is to be expected since they are based on the same principles. BFTDB does slightly better than Path-ZVA for the FTC because of the relatively large probability contribution of paths that leave $\core$ --- the numerical procedure behind BFTDB determines $\ddd$ for all states, which means that it is able to perform better in this specific setting. (Note that their numerical approach cannot be applied to general HRMSs, for example those that include HPCs). BFB does not perform well in our implementation because it draws much fewer samples per second than the other schemes. This is because we use the default biasing probability of $0.5$ for failures, which means that a typical sample path will be considerably longer than under the other schemes. For example, under MC the sample path will typically reach the taboo state very quickly, whereas under Path-ZVA the system quickly reaches the goal state or a state outside $\core$ after which IS is turned off.

Note that in all models the transition rates are fixed, so the choice of $\epsilon$ is arbitrary. As we discussed in Section~\ref{sec: practical application}, our approach is to fix a value $\epsilon$ and choose the $\epsilon$-orders of the transitions as the smallest order such that the pre-factor $p_{\state\stateb} / \epsilon^{\asyrate_{\state\stateb}}$ is still greater than $\epsilon$. The $\epsilon$-values chosen by \citet{carrasco2006failure} were $0.00072$ for both settings of the FTD and $0.00028$ for setting A of the FTC. We have observed that using an $\epsilon$-value of $0.001$ for the FTD led to a large reduction in terms of the size of $\core\cup\edge$ and hence the duration of the pre-processing step, without adversely affecting the performance of Path-ZVA to a notable extent. This is what we have used for Table~\ref{tab: carrasco comparison}.

\setlength{\tabcolsep}{0.75pt}
\begin{table}[!ht]
 \centering
\tbl{Comparison with three reliability models from the literature; BFTDB is the simulation method proposed in [Carrasco 2006]. For the FTD we used $\epsilon=0.001$, and for the FTC we used $\epsilon=0.00072$. We use a confidence level of 95\%, whereas [Carrasco 2006] uses 99\%; the results have been rescaled accordingly.
Runtimes for BFTDB are from [Carrasco 2006], while the other runtimes are from our tool;
because of software implementation and hardware differences, the comparison is at best indicative. \rdd{For our experiments, we chose the simulation runtime in each case to be around $2$ minutes, leading to different numbers of runs for the different methods.}}{
\begin{tabular}{|cccccc|clccc|ccccccccc|ccc|}
	\hline
	\rule{0pt}{1.1em}
	\phantom{-} & \multirow{2}{*}{model} & \phantom{-} & \phantom{-} & \multirow{2}{*}{method} & \phantom{-} &  \phantom{-} & \multicolumn{3}{c}{\multirow{2}{*}{estimate ($\unaiv$)}} & \phantom{-} & \phantom{-} & \multirow{2}{*}{$N$} & \phantom{-} & \phantom{-} & \multicolumn{4}{c}{runtime (ms)} & \phantom{-} & \phantom{-} & \multirow{2}{*}{\rdd{WNVR}} & \phantom{-} \\
	 & & & & & &  & & & & & & & & & num. & & & sim. & & & & \\ \hline
	 & \multirow{4}{*}{FTD (I)} & &  & MC & &  & $1.827$\ex{-8} & $\pm$ & 8.10\% & & & 32184109 & & & 0 & & & 120294 & & & 1.00 & \\
	 & & &  & BFB & &  & $1.931$\ex{-8} & $\pm$ & 14.9\% & & & 27396 & & & 0 & & & 120026 & & & 0.30 & \\
	 & & &  & ZVA-$\ddd$ & &  & $1.827$\ex{-8} & $\pm$ & 0.23\% & & & 2554961 & & & 5994 & & & 120005 & & & 1178.93 & \\
	 & & &  & ZVA-$\dom$ & &  & $1.829$\ex{-8} & $\pm$ & 0.19\% & & & 2726637 & & & 4755 & & & 120001 & & & 1847.67 & \\
	 & & &  & BFTDB & &  & $1.814$\ex{-8} & $\pm$ & 0.23\% & & & 2999000 & & & 0 & & & 112000 & & & 268.22 &  \\ \hline
	 & \multirow{4}{*}{FTD (II)} & &  & MC & &  & $1.728$\ex{-8} & $\pm$ & 6.51\% & & & 42956445 & & & 0 & & & 120001 & & & 1.00 & \\ 
	 & & &  & BFB & &  & $1.636$\ex{-8} & $\pm$ & 12.5\% & & & 28372 & & & 0 & & & 120005 & & & 0.27 & \\
	 & & &  & ZVA-$\ddd$ & &  & $1.622$\ex{-8} & $\pm$ & 0.20\% & & & 3110348 & & & 522 & & & 120001 & & & 1011.29 & \\ 
	 & & &  & ZVA-$\dom$ & &  & $1.621$\ex{-8} & $\pm$ & 0.22\% & & & 2756305 & & & 527 & & & 120001 & & & 888.40 & \\
	 & & &  & BFTDB & &  & $1.621$\ex{-8} & $\pm$ & 0.15\% & & & 3999000 & & & 0 & & & 150000 & & & 284.54 &  \\ \hline
	 & \multirow{4}{*}{FTC (A)} & &  & MC & &  & $3.232$\ex{-10} & $\pm$ & 50.7\% & & & 20827971 & & & 0 & & & 120001 & & & 1.00 & \\
	 & & &  & BFB & &  & $1.908$\ex{-10} & $\pm$ & 77.8\% & & & 3008 & & & 0 & & & 120052 & & & 0.43 & \\
	 & & &  & ZVA-$\ddd$ & &  & $2.682$\ex{-10} & $\pm$ & 0.94\% & & & 827848 & & & 448936 & & & 120048 & & & 609.53 & \\
	 & & &  & ZVA-$\dom$ & &  & $2.696$\ex{-10} & $\pm$ & 0.38\% & & & 937337 & & & 478349 & & & 120054 & & & 3627.10 & \\ 
	 & & &  & BFTDB & &  & $2.694$\ex{-10} & $\pm$ & 0.15\% & & & 1204000 & & & 0 & & & 319000 & & & 8112.75 &  \\ \hline
\end{tabular}}
\label{tab: carrasco comparison}
\end{table}
\setlength{\tabcolsep}{6pt}

\section{Conclusions} \label{sec: mc conclusion}

We have introduced a rare event simulation method that is generally 
applicable to HRMSs, provided that the relevant subset $\Lambda$ is numerically tractable.
\rdf{This is often the case, but not always, e.g., when the reliability of the system is due to high component redundancy}.
We have mathematically 
proved its efficiency and discussed an automated implementation. We have demonstrated its good performance across a range of case studies, including a realistic benchmark model. For one repair strategy (First Come First Served), the new method was able to compute probabilities that cannot be obtained using either standard Monte Carlo or the numerical approximation techniques used in, e.g., PRISM. We also discussed a further variance reduction technique and demonstrated its good performance. The code \rdd{for} the experiments is available on \website\ for download.

There are several directions for future work. The simulation code has not been optimised for performance, so improving it is future work. The variance reduction technique of Section~\ref{sec: vrff} could be studie\rdb{d} in more detail, and across a wider range of models. Finally, we could compare the performanc\rda{e o}f our method to a wider range of other IS techniques, e.g., the cross-entropy method.

\begin{acks}
{This work is partially supported by the Netherlands Organisation for Scientific Research (NWO), project number 612.064.812, and by the EU projects QUANTICOL, 600708, and SENSATION, 318490. \rdc{The authors would like to thank Jane Hillston for her helpful comments on a draft version of this paper}.}
\end{acks}

\bibliographystyle{ACM-Reference-Format-Journals}
\bibliography{biblio}

\end{document}